\documentclass[oneside,reqno]{amsart}
\usepackage{amsmath, amsthm, mathrsfs, amssymb, amsfonts, mathtools}
\usepackage{graphicx}
\usepackage{hyperref}
\hypersetup{colorlinks=true, linkcolor=blue, citecolor=magenta, filecolor=magenta, urlcolor=cyan}

\usepackage{doi}

\usepackage{tikz}
\usetikzlibrary{patterns.meta}
\tikzset{regions diagram/.style={
x=1cm,
    y=1cm,
    >=stealth,
    line width=0.9pt,
    line cap=round,
    line join=round,
    font=\LARGE
}}

\newcommand{\RegionsGeometry}{
\def\dx{1.35}
\def\dy{0.95}
\def\Lwidth{3.80}
\def\Lheight{1.93}
\pgfmathsetmacro{\slope}{\dy/\dx}
\def\cornerX{1.75}
\pgfmathsetmacro{\cornerY}{\slope*\cornerX}
\def\blobx{2.40}
\def\blobOffset{0.05}
\pgfmathsetmacro{\bloby}{\slope*\blobx+\blobOffset}
\pgfmathsetmacro{\upperintercept}{\Lheight}
\pgfmathsetmacro{\lowerintercept}{
    \cornerY-\slope*(\cornerX+\Lwidth)
}
\pgfmathsetmacro{\lowerxzero}{
    -\lowerintercept/\slope
}
}

\def\hatchAngle{45}
\def\hatchDistance{1.75pt}
\def\hatchLineWidth{0.30pt}
\tikzset{
    left hatch/.style={
        pattern={
            Lines[
                angle=\hatchAngle,
                distance=\hatchDistance,
                line width=\hatchLineWidth
            ]
        },
        pattern color=black
    },
    right hatch/.style={
        pattern={
            Lines[
                angle=-\hatchAngle,
                distance=\hatchDistance,
                line width=\hatchLineWidth
            ]
        },
        pattern color=black
    }
}

\def\blob{
    (-0.48,0.55)
    .. controls (-0.53,0.55) and (-0.57,0.50)
       .. (-0.57,0.43)
    --
    (-0.57,-0.28)
    .. controls (-0.57,-0.38) and (-0.49,-0.44)
       .. (-0.39,-0.44)
    --
    (0.51,-0.44)
    .. controls (0.61,-0.44) and (0.67,-0.37)
       .. (0.67,-0.28)
    --
    (0.67,0.02)
    .. controls (0.67,0.11) and (0.59,0.17)
       .. (0.50,0.17)
    --
    (0.18,0.17)
    .. controls (0.10,0.17) and (0.06,0.22)
       .. (0.06,0.30)
    --
    (0.06,0.44)
    .. controls (0.06,0.51) and (-0.01,0.55)
       .. (-0.09,0.55)
    --
    cycle
}

\def\originblob{
    (0,0)
    --
    (1.18,0)
    --
    (1.18,0.47)
    .. controls (1.18,0.58) and (1.10,0.64)
       .. (1.00,0.64)
    --
    (0.69,0.64)
    .. controls (0.59,0.64) and (0.54,0.70)
       .. (0.54,0.80)
    --
    (0.54,1.17)
    .. controls (0.54,1.27) and (0.47,1.32)
       .. (0.37,1.32)
    --
    (0,1.32)
    --
    (0,0)
    --
    cycle
}

\newcommand{\RegionsBoundaries}{
\draw[->]
    (-0.35,0) -- (12.8,0)
    node[below left] {$m$};
\draw[->]
    (0,-0.35) -- (0,8.2)
    node[above left] {$n$};
\node[below left, inner sep=6pt] at (0,0) {$0$};
\draw
    (0,\upperintercept)
    --
    (7.55,{7.55*\slope+\upperintercept});
\draw[dashed]
    (7.55,{7.55*\slope+\upperintercept})
    --
    (8.85,{8.85*\slope+\upperintercept});
\draw
    (\lowerxzero,0)
    --
    (11.05,{11.05*\slope+\lowerintercept});
\draw[dashed]
    (11.05,{11.05*\slope+\lowerintercept})
    --
    (12.55,{12.55*\slope+\lowerintercept});
\foreach \k in {0,...,4} {
    \pgfmathsetmacro{\lx}{\cornerX+\k*\dx}
    \pgfmathsetmacro{\ly}{\cornerY+\k*\dy}
    \draw
        (\lx,{\ly+\Lheight})
        --
        (\lx,\ly)
        --
        ({\lx+\Lwidth},\ly);
}
\pgfmathsetmacro{\lx}{\cornerX+5*\dx}
\pgfmathsetmacro{\ly}{\cornerY+5*\dy}

\draw[dashed]
    (\lx,{\ly+\Lheight})
    --
    (\lx,\ly)
    --
    ({\lx+\Lwidth},\ly);
}

\newcommand{\RegionsCenterLine}{
\draw
    (0,0)
    --
    (8.35,{8.35*\slope});
\draw[dashed]
    (8.35,{8.35*\slope})
    --
    (11.15,{11.15*\slope});
}

\colorlet{extendedFill}{black!15}

\usepackage[backend=biber, style=trad-alpha, doi=true, isbn=false, hyperref=true, backref=false, giveninits=true, abbreviate=true, sorting=nyt]{biblatex}
\usepackage[T1]{fontenc}
\usepackage[scaled=0.95]{inconsolata}
\usepackage{fvextra}

\DefineVerbatimEnvironment{ShellInput}{Verbatim}{
    fontsize=\small,
    fontfamily=tt,
    fontseries=b,
    frame=none,
    xleftmargin=1.5em,
    breaklines=true
}

\DefineVerbatimEnvironment{ShellOutput}{Verbatim}{
    fontsize=\small,
    fontfamily=tt,
    fontseries=m,
    frame=none,
    xleftmargin=1.5em,
    breaklines=true
}

\DefineVerbatimEnvironment{CodeExample}{Verbatim}{
    fontsize=\small,
    fontfamily=tt,
    fontseries=m,
    frame=none,
    xleftmargin=1.5em,
    breaklines=true
}

\definecolor{verylightgray}{rgb}{0.9,0.9,0.9}
\newcommand{\Kzero}{K}
\newcommand{\setS}{\mathcal{S}}
\newcommand{\Mathematica}{\textit{Mathematica}}
\newcommand{\filename}[1]{\texttt{\detokenize{#1}}}
\newcommand{\gitRepName}{\detokenize{conformal-wave-cap}}

\newtheorem{theorem}{Theorem}

\newtheorem{lemma}{Lemma}

\newtheorem{obs}{Observation}
\theoremstyle{definition}

\theoremstyle{remark}

\usepackage{orcidlink}

\usepackage{afterpage}

\numberwithin{equation}{section}
\mathtoolsset{showonlyrefs}

\begin{document}
\title[]{New time-periodic solutions to the cubic conformal wave equation on $\mathbb{S}^{3}$: I Spherical symmetry}

\author{Filip Ficek\hspace{0.233ex}\orcidlink{0000-0001-5885-7064}}

\author{Maciej Maliborski\hspace{0.233ex}\orcidlink{0000-0002-8621-9761}}

\address{TU Wien, Institute of Analysis and Scientific Computing, Wiedner Hauptstra\ss{}e 8--10, 1040 Vienna, Austria and University of Vienna, Faculty of Mathematics, Oskar-Morgenstern-Platz 1, 1090 Vienna, Austria and University of Vienna, Gravitational Physics, Boltzmanngasse 5, 1090 Vienna, Austria}

\email[Filip Ficek]{filip.ficek@univie.ac.at}

\email[Maciej Maliborski]{maciej.maliborski@tuwien.ac.at}

\thanks{We acknowledge the support of the Austrian Science Fund (FWF) through Projects \href{http://doi.org/10.55776/PAT4519925}{\texttt{10.55776/PAT4519925}}, \href{http://doi.org/10.55776/P36455}{\texttt{10.55776/P36455}}, and \href{http://doi.org/10.55776/PAT9429324}{\texttt{10.55776/PAT9429324}}, and the START-Project \href{http://doi.org/10.55776/Y963}{\texttt{10.55776/Y963}}. Computations were performed on a workstation \texttt{tensor} hosted by the \href{https://www.tuwien.at/en/mg/asc}{Institute of Analysis and Scientific Computing (ASC), TU Wien}}

\keywords{Time-periodic solutions; Nonlinear wave equation; Bifurcations}
\subjclass{Primary: 35B10; Secondary: 35B32, 35L71, 65M70, 65P30}

\begin{abstract}
    We present a rigorous construction of novel time-periodic solutions to the radially symmetric cubic conformal wave equation on the Einstein cylinder $\mathbb{R}\times\mathbb{S}^{3}$. Their existence, hinted by the earlier numerical studies \cite{Ficek.2025a}, complements prior results on Cantor sets of infinitesimally small solutions.
    Our proof, based on the contraction principle, requires extensive computations that are performed on a computer by the means of rational arithmetic. We are able to significantly reduce their amount by a thorough analysis of the inverse of the associated linear operator, leading to sharper analytical bounds, as well as introduction of simplified versions of certain relevant operators.
    We provide examples of three solutions with specific frequency; however, our techniques and tools can be directly adapted to prove the existence of other solutions conjectured in \cite{Ficek.2025a}.
\end{abstract}

\date{\today}

\maketitle

\tableofcontents

\section{Introduction}
\label{sec:Introduction}

\subsection{The model}
\label{sec:Model}

The conformal cubic wave equation on the Einstein cylinder $\mathbb{R}\times\mathbb{S}^{3}$ provides a geometrically natural model of nonlinear wave dynamics under spatial confinement. We study a real scalar field $\phi$ satisfying
\begin{equation}
	\label{eq:ConformalCovariant}
	\Box_{g}\phi-\frac{1}{6}R(g)\phi-\phi^{3}=0\,,
\end{equation}
where $\Box_{g}=g^{\mu\nu}\nabla_\mu\nabla_\nu$ and $R(g)$ is the scalar curvature. The cubic term has the defocusing sign and its coefficient has been normalised by rescaling the field. The curvature coupling and the cubic nonlinearity make \eqref{eq:ConformalCovariant} conformally invariant in four spacetime dimensions.

The background metric of the Einstein cylinder is
\begin{equation}
	\label{eq:EinsteinCylinderMetric}
	g=-\mathrm{d}t^{2}+\mathrm{d}x^{2}+\sin^{2}x\left(\mathrm{d}\theta^{2} +\sin^{2}\theta\,\mathrm{d}\varphi^{2}\right)\,,
\end{equation}
where $t\in\mathbb{R}$, $x\in[0,\pi]$, $\theta\in[0,\pi]$, and $\varphi\in[0,2\pi)$. The radius of the spatial three-sphere is normalised to one, so that $R(g)=6$.

We restrict attention to spherically symmetric fields, $\phi(t,x,\theta,\varphi)=w(t,x)$. Equation \eqref{eq:ConformalCovariant} then reduces to
\begin{equation}
	\label{eq:ConformalPhysicalTime}
	\partial_t^{2}w-\frac{1}{\sin^{2}{x}}\partial_x\left(\sin^{2}{x}\,\partial_x w\right)+w+w^{3}=0\,,
	\qquad x\in(0,\pi)\,.
\end{equation}
Smoothness of $\phi$ at the poles requires $w$ to admit smooth even extensions across $x=0$ and $x=\pi$. In particular,
\begin{equation}
	\partial_x w(t,0)=\partial_x w(t,\pi)=0\,.
\end{equation}
Thus, the endpoint conditions express regularity on $\mathbb{S}^{3}$.

Conformal invariance relates this model to a self-interacting scalar field on four-\linebreak dimensional anti-de Sitter spacetime. This correspondence connects the model to questions of nonlinear dynamics and stability in general relativity with a negative cosmological constant; for details, see e.g. \cite{Bizoń.2014}.

For regular solutions, the conserved energy is
\begin{equation}
	\label{eq:ConformalEnergy}
    E[w]=\int_0^\pi
    \left[
        \frac{1}{2}(\partial_t w)^2
        +\frac{1}{2}(\partial_x w)^2
        +\frac{1}{2}w^2
        +\frac{1}{4}w^4
	\right]\sin^2x\,\mathrm{d}x\,.
\end{equation}
The equation also admits a breathing mode whose modulus is conserved \cite{Evnin.2020fys}. For the solutions considered here, which have definite parity under reflection across the equator, this quantity vanishes identically; see \cite{Ficek.2025a}.

An equivalent formulation is obtained by setting
$u(t,x)=\sin x\,w(t,x)$:
\begin{equation}
	\label{eq:ConformalDirichlet}
	\partial_t^2u-\partial_x^2u+\frac{u^3}{\sin^2x}=0,
	\qquad
	u(t,0)=u(t,\pi)=0\,.
\end{equation}
This formulation is used, for example, in \cite{Bizoń.2017ndf, Ficek.2025a}. The Dirichlet conditions follow from the regularity of the original field; more precisely, a smooth spherically symmetric field gives rise to smooth odd extensions of $u$ across the endpoints. We work with $w$ because, although this formulation introduces geometric coefficients into the linear operator, it retains the simple nonlinearity $w^3$. This form is convenient for the functional setting and operator estimates developed below.

Our aim is to establish the existence of nontrivial, regular, finite-energy time-periodic solutions of \eqref{eq:ConformalPhysicalTime}. Writing $\tau=\Omega t$, with $\Omega>0$, we seek solutions of
\begin{equation}
	\label{eq:conformal}
	Lw+w^3=0\,,
	\qquad
	w(\tau+2\pi,x)=w(\tau,x)\,,
\end{equation}
where
\begin{equation}
	\label{eq:26.10.08_02}
	L=\Omega^2\partial_\tau^2-\partial_x^2-2\cot x\,\partial_x+1\,.
\end{equation}

For a specified value of $\Omega$, we prove the existence of three distinct solutions by validating three explicit approximations. The proof combines analytical operator estimates with computations in exact rational arithmetic that verify a contraction criterion and establish the pairwise distinctness of the resulting solutions.

We note that relaxing spherical symmetry introduces angular dependence and leads to solutions with a much richer structure \cite{Maliborski.2016, Rostworowski.2017, Bizon.Personal2018}. In particular, the axially symmetric problem requires additional analysis, which will be presented in a separate work \cite{Ficek.Future}.

\subsection{Main result}
\label{sec:MainResult}

Our main result is summarised in the following theorem.

\begin{theorem}\label{thm:main}
    For $\Omega=23/8$ and each of $v^{(i)}$, $i=1,2,3$ given explicitly in the Supplemental Material, there exists a corresponding solution $w^{(i)}$ to Eq.~\eqref{eq:conformal} close to $v^{(i)}$ in the sense $\Vert w^{(i)}-v^{(i)} \Vert<\varepsilon^{(i)}$, where $\Vert \cdot \Vert$ is a weighted $\ell^{1}$ norm defined in Section~\ref{sec:FunctionalSpaces}, and $\varepsilon$'s are listed in \eqref{eq:26.10.08_01}. The solutions $\{\pm w^{(i)}\}_{i=1}^3$ are pairwise distinct, i.e., $w^{(i)}\ne\pm w^{(j)}$ for $i\ne j$.
\end{theorem}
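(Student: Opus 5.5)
The plan is a computer-assisted Newton–Kantorovich (radii polynomial) argument in a weighted $\ell^{1}$ space of expansion coefficients. I expand $w$ in the eigenbasis of $L$: in time, $\cos(k\tau)$ (restricting to solutions even in $\tau$, and, using the symmetry $w\mapsto -w$ together with parity under $x\mapsto\pi-x$, to the appropriate sub-lattice of modes); in space, $e_n(x)=\sin((n+1)x)/\sin x$, the radial eigenfunctions of $-\partial_x^2-2\cot x\,\partial_x+1$ with eigenvalues $(n+1)^2$. Thus $L$ is diagonal with symbol $\lambda_{k,n}=(n+1)^2-\Omega^2k^2$. For $\Omega=23/8$ we have $\Omega^2=529/64$, and $\lambda_{k,n}=0$ would require $8(n+1)=23k$, i.e.\ $23\mid n+1$ and $8\mid k$. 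So I either restrict to a symmetry class excluding these resonances, or note $|\lambda_{k,n}|\ge c>0$ away from them after rationality, with $|\lambda_{k,n}|\gtrsim (n+1+\Omega k)\,\mathrm{dist}$ giving growth in $\max(k,n)$ off a neighbourhood of the line $n+1=\Omega k$. The key algebraic input is the Banach algebra property of the weighted $\ell^1$ norm. I take weights $\nu^{k+n}$ with $\nu\ge1$, and use that products $e_ne_m=\sum_{j}e_{j}$ (Clebsch–Gordan for $SU(2)$ characters, all coefficients $1$ and nonnegative) and cosine products give $\|uv\|\le\|u\|\|v\|$.

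Next I set $F(w)=Lw+w^3$ and $T(w)=w-A F(w)$, where $A$ approximates $DF(v)^{-1}=(L+3v^2)^{-1}$. I build $A$ as a finite matrix inverse of the truncated Jacobian on modes $k\le K$, $n\le N$, and as $L^{-1}$ on the tail. The standard bounds are $Y_0\ge\|AF(v)\|$, $Z_0\ge\|I-A\,DF(v)\|$ restricted appropriately, and $Z_2$ controlling $\|A\|\cdot 3(2\|v\|+r)$. A ball of radius $r$ around $v$ is then contracted if $Y_0+Z_0r+Z_2r^2<r$ and $Z_0+2Z_2r<1$ (with $Z_1$ for the tail-coupling term). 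Since $v$ is a finite trigonometric/Chebyshev-type polynomial, $F(v)$ has finitely many coefficients and $Y_0$ is computed exactly in rational arithmetic. The tail part of $Z$ uses $\sup_{\text{tail}}|\lambda_{k,n}|^{-1}\cdot3\|v^2\|$.

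The hard step is controlling $L^{-1}$ on the tail. Near the characteristic directions $n+1\approx\Omega k$, $|\lambda_{k,n}|$ does not grow, only $|\lambda_{k,n}|=|n+1-\Omega k|(n+1+\Omega k)\ge\frac18(n+1+\Omega k)$ when nonzero. So $|\lambda|^{-1}$ decays only like $1/(k+n)$, and it does so along infinitely many tail modes. I would handle this by treating these resonant bands (the L-shaped regions around the line in the figure) analytically: on each band $|\lambda|^{-1}\le 8/(n+1+\Omega k)$ is small once $K,N$ are large. I would then bound the tail block operator norm in weighted $\ell^1$ as a supremum over columns of $\sum$ row contributions, exploiting that $v^2$ has finite support so each column interacts with finitely many rows. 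Choosing $K,N$ large enough that $3\|v^2\|\cdot 8/(K\Omega)$ is well below $1$ should close the argument, and I will simplify $A$ off the finite block to keep the computation manageable.

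Finally, distinctness follows from the triangle inequality. If $\|v^{(i)}\mp v^{(j)}\|>\varepsilon^{(i)}+\varepsilon^{(j)}$, checked exactly in rationals, then $w^{(i)}\ne\pm w^{(j)}$. Nontriviality follows similarly from $\|v^{(i)}\|>\varepsilon^{(i)}$.
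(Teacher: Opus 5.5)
You reach distinctness exactly as the paper does: an exact check of $\Vert v^{(i)}\mp v^{(j)}\Vert>\varepsilon^{(i)}+\varepsilon^{(j)}$ plus the triangle inequality. The existence part, however, has two genuine problems.

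\textbf{The algebra estimate is false in your basis.} The identity $e_ne_m=\sum_{j=0}^{\min(n,m)}e_{|n-m|+2j}$ is correct, but its $\min(n,m)+1$ unit coefficients are exactly what destroy submultiplicativity. At $\nu=1$ you get $\Vert e_n^2\Vert=n+1$ against $\Vert e_n\Vert^2=1$. For $\nu>1$ you only get $\Vert uv\Vert\le\frac{\nu^2}{\nu^2-1}\Vert u\Vert\,\Vert v\Vert$, about $5.8$ at $\nu=11/10$ and squared in the cubic terms. This is the obstruction the paper cites for abandoning the eigenbasis. It uses $P_{m,n}=\cos(2m+1)\tau\,\cos(2n+1)x$ instead, where the weighted $\ell^1$ norm is genuinely submultiplicative for triple products. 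The price is that $L$ is only triangular, so $L^{-1}$ must be written out and estimated by hand (Section~\ref{sec:boundsL-1}). Your radius would also be measured in a different norm from the one in the statement. Comparing norms loses constants: for instance $e_{2n+1}=2\sum_{l\le n}\cos(2l+1)x$ costs up to $2\rho_x^2/(\rho_x^2-1)$. So the stated $\varepsilon^{(i)}$ would not follow directly. Of your two options for the resonances, only the symmetry restriction is viable. Odd time harmonics make $8(n+1)-23k$ odd, which is the paper's $|\alpha(m,n)|\ge q^{-2}$; otherwise $L$ has a kernel.

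\textbf{The step you flag as hard is closed by a crude bound, and this is the decisive gap.} You require $\sup_{\mathrm{tail}}|\lambda|^{-1}\cdot 3\Vert v^2\Vert\ll1$. Along the band $|\lambda|^{-1}$ decays only like $8/(n+1+\Omega k)$, so you need $K\gg 24\,C_\nu\Vert v^2\Vert/\Omega$ with $C_\nu=\nu^2/(\nu^2-1)$. For the large solutions, with $\Vert v\Vert\approx 22$, that means hundreds to thousands of temporal modes and $\Omega$ times as many spatial ones. Inverting such a dense Galerkin block rigorously is impractical, and even the paper's sharp scheme only reaches $\Vert H\Vert<0.995$ for $v^{(2)}$. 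Given nondegeneracy, compactness of $L^{-1}$ makes your scheme work for large enough $K,N$ in principle, but that is not a proof of this theorem.

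The missing idea is the paper's reduction to finitely many column checks at moderate size:
\begin{itemize}
\item Lemmas~\ref{lem:band1} and~\ref{lem:band2} confine large $\Vert L^{-1}P_{m,n}\Vert/\Vert P_{m,n}\Vert$ to a band around $n=\Omega m$.
\item The rational frequency gives the lattice period $(q,2p+1)=(8,23)$ along that band.
\item Under condition $(\ast)$, verified in Appendix~\ref{sec:app_ast}, the column bound $g$ of Lemma~\ref{lem:decreasing_g} decreases along $(m,n)\mapsto(m+q,n+2p+1)$.
\end{itemize}
Hence the column-wise test $\Vert 3L^{-1}\Lambda P_{m,n}\Vert/\Vert P_{m,n}\Vert<\sigma$ needs only finitely many evaluations. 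That test exploits the fact that $\Lambda P_{m,n}$ lands mostly on off-resonant modes, rather than requiring $\Vert L^{-1}\Vert\,\Vert\Lambda\Vert$ to be small. The correction is then applied as a right preconditioner, $H=I-(I+3L^{-1}\Lambda)A$, with $A=I$ off the small set $\mathcal{S}_A$, so $\Vert A\Vert$ enters only the $\delta$-terms. Your idea of using the finite support of $v^2$ column by column is the beginning of $g$. Without monotonicity along lattice translates, though, infinitely many columns remain, which is why you fall back on the crude bound.
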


This is the first result of this kind for a Hamiltonian PDEs in higher dimensions. In fact, this is the first proof of existence of large periodic solutions of the problem directly related to the analogous gravitational system.

The profiles of approximate solutions $v^{(i)}$ are shown in Figure~\ref{fig:3dplots}. Note that even though the plots for $v^{(2)}$ and $v^{(3)}$ are very similar, they correspond to distinct solutions, as stated in Theorem~\ref{thm:main}.

Let us emphasise that our approach is very robust and can be directly adapted to prove the existence of other solutions to this problem, as long as their frequency has a form $\Omega=\frac{2p+1}{q}$, where $p\in\mathbb{N}$, $q\in\mathbb{N}_+$, see Section~\ref{sec:preliminaries} for the details.

\begin{figure}[tbp]
  \centering
   \includegraphics[width=\textwidth]{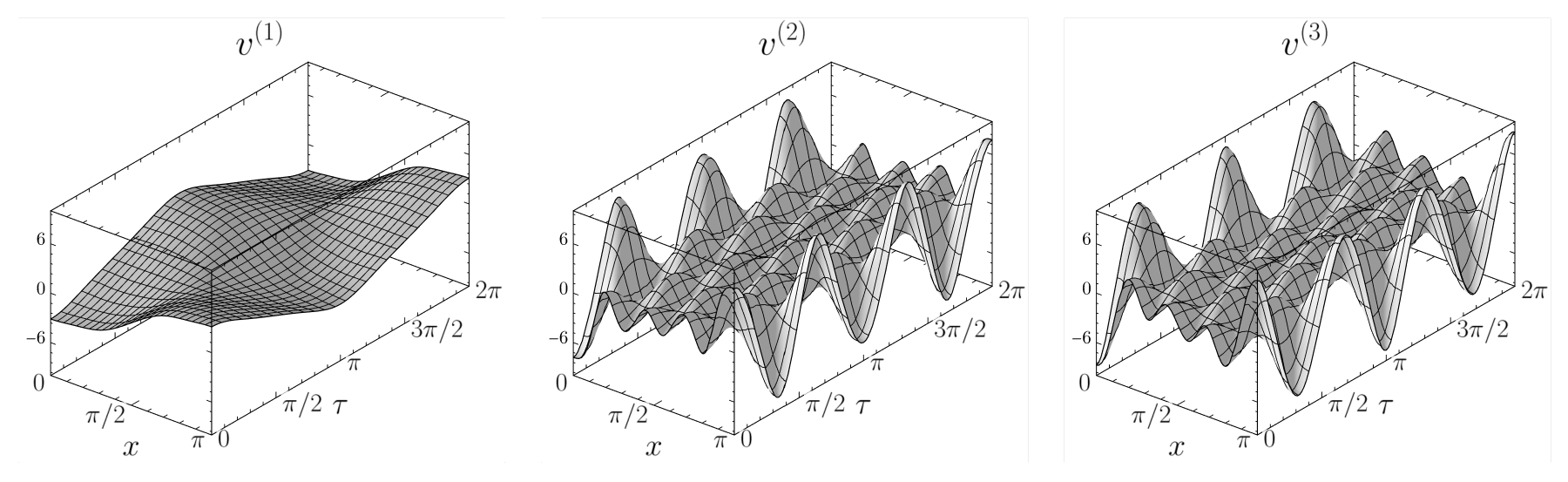}
  \caption{Three-dimensional plots of the approximate time-periodic solutions $v^{(i)}$, $i=1,2,3$, of \eqref{eq:conformal}-\eqref{eq:26.10.08_02}, with common frequency $\Omega=23/8$. The truncation sizes are $(M,N)=(10,20)$, $(42,84)$, and $(38,76)$ for the first, second, and third solutions, respectively. The corresponding approximate energies~\eqref{eq:ConformalEnergy} are $18.85$, $95.73$, and $97.44$.}
  \label{fig:3dplots}
\end{figure}

\subsection{Previous work}

The interest in time-periodic solutions of Hamiltonian PDEs has a rather long history \cite{berti2007nonlinear}. Classical proofs of existence were limited to simple models, like one-dimensional cubic wave equation on an interval, and due to the intrinsic complexity and technical obstacles were necessarily restricted to small amplitude solutions with existence stated on a Cantor sets only. A series of works analysed small periodic solutions to the conformal equation \cite{Chatzikaleas.2020oh, Berti.2024, Silimbani.2024, Chatzikaleas.2024}, its variants with other nonlinearities \cite{Berti.2024}, or different than spherical symmetry reductions (Hopf fibration) \cite{Berti.2024, Chatzikaleas.2024}.

Nevertheless, our recent numerical results on the wave equation and the conformal equation suggest that periodic solutions exist for arbitrary amplitudes. Moreover, those solutions form extraordinary patterns on energy-frequency diagram, which are consequences of distinct Fourier modes dominating others due to nonlinear resonances. Those observations, were analysed by the authors in a series of works: \cite{Ficek.2024A, Ficek.2024B} and \cite{Ficek.2025a, Ficek.2026proc} for cubic wave equation and the conformal equation studied here, respectively, and also the cubic beam equation \cite{ficek2026nonlinear}.
Moreover, to put the numerical results on a firmer footing in \cite{ficek2025new} we proved existence of a specific large-amplitude solutions of the cubic wave equation by adapting the idea of \cite{Arioli.2017}. This proof uses computer assistance to verify certain operator bounds for a set of some concrete approximate solutions, ultimately proving the existence of true solutions in their neighbourhood. 
In addition, the first author used a similar approach to get an elementary proof of existence of perturbative solutions, without computer assistance \cite{Ficek2025timeperiodic}.

\subsection{Novel aspects}

In the present work we extend the analysis and techniques developed in \cite{ficek2025new} to the conformal equation \eqref{eq:ConformalPhysicalTime} and prove the existence of large solutions first reported in \cite{Ficek.2025a}. This requires substantial new insight and effort as a direct application of the previous approach is not feasible.
Compared to the cubic wave equation, using eigenbasis of the linearised problem leads to the issues with control of the nonlinearity.
Instead, we use simple basis functions introduced below. The price to pay is that the linear operator \eqref{eq:26.10.08_02} is no longer diagonal, so the control on its inverse requires substantially more work.
Thus, contrary to the cubic wave equation, significant effort was devoted into understanding the $L^{-1}$ to optimise the number of sites where direct computations are needed, consequently extending the applicability of analytical bounds.
Moreover, another crucial improvement is a reduction of operators entering the analysis, the $\Lambda$ and $A$ introduced in Section~\ref{sec:FixedPointFormulation}, allowing us to speed up, and eventually, make the computations possible in a reasonable time on moderate computers. Thus, this work presents a substantial advancement compared to \cite{ficek2025new} and serves as a crucial step toward addressing even more complex problems in the near future.

\subsection{Outline}
\label{sec:Outline}

Our line of work is similar to the one employed in \cite{ficek2025new}. We reformulate the equation as a specially tailored fixed point problem in an appropriate functional space. Then, we show that in sufficiently small neighbourhoods of the approximate solutions (found numerically) relevant operators are contractions, letting us apply the Banach contraction principle. This strategy is described in greater detail in Section~\ref{sec:framework}. The main difficulty is posed by estimating the norm of the linear operator $L^{-1}$. By deriving proper analytical bounds in Section \ref{sec:estimates}, we are able to reduce it to executing a finite number of calculations that are performed by a computer, as described in Section~\ref{sec:ComputeAssistedVerification}.

\section{Framework and strategy of the proof}
\label{sec:framework}

\subsection{Preliminaries}
\label{sec:preliminaries}

Instead of working in the eigenbasis of a linear operator \eqref{eq:26.10.08_02}, we introduce simple trigonometric basis functions
\begin{align}
    P_{m,n}=\cos(2m+1)\tau\, \cos(2n+1)x\,,
\end{align}
with $m,n\in\mathbb{N}$. Let us also define the function
\begin{align}
    \alpha(m,n)=-\Omega^2(2m+1)^2+4(n+1)^2\,
\end{align}
for natural numbers $m$ and $n$. Then, a short calculation leads to the following decomposition
\begin{align}
    \cot x\, \partial_x P_{m,n}=-(2n+1)\left(P_{m,n}+2\sum_{k=0}^{n-1}P_{m,k}\right)\,.
\end{align}
It allows us to write the action of the linear operator $L$ on the basis functions as
\begin{align}
    L P_{m,n}=\alpha(m,n)P_{m,n} +4(2n+1)\sum_{k=0}^{n-1} P_{m,k}\,.
\end{align}
In opposition to the cubic wave equation studied in \cite{ficek2025new}, $L$ is no longer a diagonal operator in the basis $P_{m,n}$, as foreshadowed in Section~\ref{sec:Introduction}. Nevertheless, since it has a triangular structure, its inverse can be written formally as
\begin{align}\label{eqn:Linv}
    L^{-1} P_{m,n}=\frac{1}{\alpha(m,n)}P_{m,n} -\frac{4(2n+1)}{\alpha(m,n-1)\alpha(m,n)}\sum_{k=0}^{n-1} P_{m,k}\,.
\end{align}
The validity of this expression depends on the properties of function $\alpha$ and is connected with the sound problem of small divisors \cite{arnold1988dynamical}. We overcome it, by restricting ourselves to frequencies having the form of $\Omega=\frac{2p+1}{q}$, where $p\in\mathbb{N}$, $q\in\mathbb{N}_+$. Then the function $\alpha$ becomes
\begin{align}\label{eqn:alpha_pq}
    \alpha(m,n)=\frac{1}{q^2}\left[-(2p+1)^2(2m+1)^2+4q^2(n+1)^2\right].
\end{align}
Since it involves a difference of an odd and even number, we have $|\alpha(m,n)|\geq q^{-2}>0$. As a result, $L^{-1}$ is well defined. In Section \ref{sec:boundsL-1} we also show its compactness.

\subsection{Functional setting}
\label{sec:FunctionalSpaces}

We define the functional space $X$ in an analogous way to the construction presented in \cite{ficek2025new} (see also \cite{Arioli.2017}). Let us introduce an auxiliary space $\tilde{X}$ that is spanned by finite linear combinations of the basis vectors $P_{m,n}$. Its elements $w\in\tilde{X}$ can be uniquely represented as finite sums $w=\sum_{m,n}\hat{w}_{m,n}P_{m,n}$. Then we define a norm on $\tilde{X}$ via
\begin{align}\label{eqn:norm}
    \Vert w \Vert = \sum_{m,n} \rho_\tau^{2m+1}\rho_x^{2n+1}|\hat{w}_{m,n}|\,.
\end{align}
where $\rho_{\tau},\rho_x>1$. Our target functional space $X$ is a completion of $\tilde{X}$ with respect to this norm. Let us note that all elements of $X$ are differentiable in real variables thanks to the weights. It also allows us to control the nonlinearity since for any $u,v,w\in X$, their pointwise product also belongs to $X$ with $\Vert uvw\Vert\leq \Vert u\Vert \Vert v\Vert \Vert w\Vert$. Finally, norms of linear operators $\mathcal{H}:X\to X$ can be estimated in this functional space quite easily since
\begin{align}\label{eqn:normOp}
\Vert\mathcal{H}\Vert=\sup_{(m,n)\in\mathbb{N}^2}\frac{\Vert \mathcal{H}(P_{m,n})\Vert}{\Vert P_{m,n}\Vert}=\sup_{(m,n)\in\mathbb{N}^2}\frac{\Vert \mathcal{H}(P_{m,n})\Vert}{\rho_\tau^{2m+1}\rho_x^{2n+1}}\,.
\end{align}
For proofs of these facts, we refer to \cite{ficek2025new}.

For future convenience, let us define $P_{m,n}$ with negative indices as
\begin{equation}\label{eqn:negative_P}
        P_{-m,n}=P_{m-1,n}\,,\qquad
        P_{m,-n}=P_{m,n-1}\,,\qquad
        P_{-m,-n}=P_{m-1,n-1}\,,
\end{equation}
where $(m,n)\in\mathbb{N}^2$. In particular, it means that for any $(m,n)\in\mathbb{Z}^2$, $\Vert P_{m,n}\Vert\leq\Vert P_{|m|,|n|}\Vert$.

\subsection{Proof of the main result}
\label{sec:FixedPointFormulation}

Let us assume that $v\in X$ is an approximate solution to Eq.~\eqref{eq:conformal} with frequency $\Omega=\frac{2p+1}{q}$ (where $p\in\mathbb{N}, q\in\mathbb{N}_+$). We define the following nonlinear function $\mathcal{N}:X\to X$
\begin{align}
    \mathcal{N}(h):=-L^{-1}(v+Ah)^3-v+(I-A)h\,,
\end{align}
where $A$ is a certain bounded linear operator. It is easy to check that any fixed point $h$ of $\mathcal{N}$ provides us with a solution $w=v+Ah$ to Eq.~\eqref{eq:conformal}. Our goal is to construct an operator $A$ such that $\mathcal{N}$ is a contraction inside some small ball $B_{\delta}(0)$. Let us represent the multiplication of elements in $X$ by $v^2$ as an operator $\Lambda:X\to X$, so
\begin{equation}
    \Lambda h=v^2h\,.
\end{equation}
Then, we can write 
\begin{align}
    \mathcal{N}(h):=\mathcal{N}(0)+H(h)+\mathcal{O}(h^2)\,,
\end{align}
where 
\begin{align}\label{eqn:H}
    H:=I-\left(I+3L^{-1}\circ \Lambda\right)\circ A\,.
\end{align}
A necessary condition for $\mathcal{N}$ to be a contraction in a neighbourhood of zero, is for $H$ to satisfy $\Vert H\Vert<1$, i.e., $\Vert H(P_{m,n})\Vert/\Vert P_{m,n}\Vert$ are strongly below one for every $(m,n)\in\mathbb{N}^2$. It would suggest $A(h)=(I+3L^{-1}\circ \Lambda)^{-1}$ as a good choice. However, finding such an inverse explicitly proves to be a very challenging problem. Instead, we construct operators $A$ in such a way that they approximate such inverses. For the future convenience by $H_0$ we will denote the operator $H$ with $A=I$, so
\begin{align}
    H_0=-3L^{-1}\circ\Lambda\,.
\end{align}

Let us now assume that the following technical result holds.
\begin{theorem}\label{thm:cap}
    For $\Omega=23/8$, $\rho_\tau=1+10^{-20}$, $\rho_x=11/10$, and each of $v\in\{v^{(1)},v^{(2)},v^{(3)}\}$, provided explicitly in the Supplemental Material, there exists a respective linear isomorphism $A:X\to X$ and resulting $H$, given by Eq.~\eqref{eqn:H}, such that one can find positive constants $\delta$ and $\Kzero$ satisfying
\begin{equation}
    \label{eq:25.05.21_05}
    \Vert H\Vert+6\left\Vert L^{-1}\right\Vert \Vert v\Vert \Vert A \Vert^2 \delta+3\left\Vert L^{-1}\right\Vert \Vert A \Vert^3 \delta^2 <\Kzero<1\,,\qquad
    \Vert \mathcal{N}(0)\Vert <(1-\Kzero)\delta
    \,.
\end{equation}
In addition, if we introduce $\varepsilon:=\Vert A\Vert\delta$, then the values $\varepsilon^{(i)}$ for respective $v^{(i)}$ satisfy
\begin{align}\label{eq:u-u0}
   \left\Vert v^{(i)} \pm v^{(j)}\right\Vert >\varepsilon^{(i)}+\varepsilon^{(j)}\,,
\end{align}
for $i,j\in\{1,2,3\}$ and $i\neq j$.
\end{theorem}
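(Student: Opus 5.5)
The proof of Theorem~\ref{thm:cap} is a computer-assisted verification, and I would organise it around reducing every quantity in \eqref{eq:25.05.21_05} to finitely many rational computations plus explicit analytical tail bounds. First, fix the construction of $A$. Choose a finite truncation set $\setS=\{(m,n): m\le M', n\le N'\}$. Compute numerically an approximate inverse $B$ of the finite matrix of $I+3L^{-1}\circ\Lambda$ restricted to the span of $\{P_{m,n}\}_{(m,n)\in\setS}$, and round it to rationals. Then set $A=B\oplus I$, i.e.\ $A$ acts as $B$ on $\setS$ and as the identity on the complement. Then $\Vert A\Vert$ is the maximum of $1$ and the finitely many weighted column norms of $B$, computed exactly via \eqref{eqn:normOp}. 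Invertibility of $A$ follows because $B$ is close to the inverse of a matrix that is itself close to $I$; it can be checked exactly by computing the determinant, or by verifying $\Vert I-B(I+3L^{-1}\Lambda)|_{\setS}\Vert<1$. To keep costs down, I would replace $\Lambda$ in the definition of $B$ by the multiplication by a truncated $v^2$. Since $A$ only needs to be an approximate inverse, this simplification is harmless.

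Second, bound $\Vert H\Vert$ using \eqref{eqn:normOp}, splitting columns $(m,n)$ into two regimes. For $(m,n)\in\setS$, the vector $H(P_{m,n})$ is an explicit finite combination of basis functions, because $v^2$ is a trigonometric polynomial and $L^{-1}$ acts triangularly by \eqref{eqn:Linv}. Its norm is therefore computed exactly in rational arithmetic. For $(m,n)\notin\setS$, we have $H(P_{m,n})=H_0(P_{m,n})$, and $\Vert H_0 P_{m,n}\Vert\le 3\Vert L^{-1}|_{W_{m,n}}\Vert\,\Vert v\Vert^2\Vert P_{m,n}\Vert$. Here $W_{m,n}$ is the span of modes reachable from $(m,n)$ by multiplication by $v^2$, which by \eqref{eqn:negative_P} means indices shifted by at most the degree of $v^2$. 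So the key analytical input is a decaying bound on $L^{-1}$ acting on high modes. From \eqref{eqn:Linv}, $\Vert L^{-1}P_{m,n}\Vert/\Vert P_{m,n}\Vert\le |\alpha(m,n)|^{-1}+4(2n+1)|\alpha(m,n-1)\alpha(m,n)|^{-1}\sum_{k<n}\rho_x^{2k-2n}$. This is small unless $\alpha$ is small, i.e.\ unless $2q(n+1)\approx(2p+1)(2m+1)$.

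Third, handle the near-resonant region, which is the step I expect to be the main obstacle. Along the line $n\approx\Omega(2m+1)/2$, the divisor $|\alpha(m,n)|$ can be as small as $q^{-2}$, while the factor $(2n+1)$ in \eqref{eqn:Linv} grows. Because $\rho_\tau=1+10^{-20}$, the $\tau$-weights give essentially no help. My first attempt would be to factor $\alpha(m,n)=q^{-2}(2q(n+1)-(2p+1)(2m+1))(2q(n+1)+(2p+1)(2m+1))$. The second factor grows like $n$ and compensates $(2n+1)$, so $\Vert L^{-1}P_{m,n}\Vert/\Vert P_{m,n}\Vert$ is $O(1)$ near resonance and $O(1/n)$ away from it. This yields a uniform bound on $\Vert L^{-1}\Vert$ (also implying compactness) and a bound on the off-resonance tail. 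For the finitely many diagonal bands where the $O(1)$ constant times $3\Vert v\Vert^2$ is not below the target, I would enlarge $\setS$ along the resonant strip, in the spirit of the L-shaped regions in the figure. The aim is to reach a cutoff beyond which the explicit analytic bound is below $K$, and to treat the rest by direct exact computation.

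Finally, compute $\Vert\mathcal N(0)\Vert=\Vert L^{-1}v^3+v\Vert$ exactly, since it is a finite sum. Combine it with $\Vert v\Vert$, $\Vert A\Vert$, $\Vert L^{-1}\Vert$ and $\Vert H\Vert$, choose rational $\delta$ and $K$ satisfying \eqref{eq:25.05.21_05}, and set $\varepsilon=\Vert A\Vert\delta$. For \eqref{eq:u-u0}, compute $\Vert v^{(i)}\pm v^{(j)}\Vert$ exactly from the coefficients (a finite rational sum) and compare it with $\varepsilon^{(i)}+\varepsilon^{(j)}$.
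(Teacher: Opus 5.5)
\textbf{There is a genuine gap in your third step.} Your overall architecture matches the paper's: a finite approximate inverse of $I+3L^{-1}\Lambda$ glued to the identity, analytic control of $H_0$ off a finite set, exact rational evaluation of $\Vert v\Vert$, $\Vert\mathcal N(0)\Vert$, $\Vert A\Vert$, $\Vert H\Vert$, and the triangle inequality for \eqref{eq:u-u0}.

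The gap is in the step that carries the whole difficulty: certifying $\Vert H_0P_{m,n}\Vert/\Vert P_{m,n}\Vert<\sigma$ on the infinitely many resonant sites outside your finite set. By your own estimate, the $L^{-1}$ ratio is only $O(1)$ on the near-resonant diagonals. On exactly those diagonals where $3\Vert v\Vert^2$ times this constant exceeds the target, you propose to enlarge $\setS$ until reaching a cutoff beyond which the analytic bound is below $\Kzero$. With a non-decaying bound no such cutoff exists. Each such diagonal is infinite, so the procedure never terminates. (A uniform $O(1)$ bound would not give compactness either.)

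Your estimate is in fact too pessimistic. Besides the odd integers $d=2q(n+1)-(2p+1)(2m+1)$ and $d-2q$, both $\alpha(m,n)$ and $\alpha(m,n-1)$ contain a factor of size about $4qn/q^2$. So on the diagonal labelled by $d$ the ratio behaves like $q^2/\bigl(2|d||d-2q|(\rho_x^2-1)n\bigr)$. This makes the problem finite in principle. However, with $3\Vert v\Vert^2\approx 1.5\cdot 10^3$ for $v^{(2)},v^{(3)}$, the resulting cutoff sits at $n$ of order $10^4$. A band of width of order $10^2$ (after inflation by the support of $v^2$) up to that height means roughly $10^6$ sites to treat exactly. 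That is far beyond the paper's computation, which already takes about a day for $v^{(2)}$.

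\textbf{The missing ingredient is monotonicity along the lattice direction of the resonance line.} For $\Omega=(2p+1)/q$, the quantity $\Delta=\Omega(2m+1)-2n$ (equivalently $d$) is invariant under $(m,n)\mapsto(m+q,n+2p+1)$. Lemma~\ref{lem:f_decreasing} shows that, under condition $(\ast)$ (verified for $(11,8,\tfrac{11}{10})$ in Appendix~\ref{sec:app_ast}), the ratio $\Vert L^{-1}P_{m,n}\Vert/\Vert P_{m,n}\Vert$ decreases along these translations. Hence the sharp site-wise bound $g$ of Lemma~\ref{lem:decreasing_g} is monotone as well. Note that $g$ weights each coefficient $|c_{\hat m,\hat n}|$ of $v^2$ by the local $L^{-1}$ ratio, rather than multiplying $\Vert v\Vert^2$ by a maximum over the window as you do.

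The paper evaluates $g$ on the fundamental L-shaped cell $\setS_0$, translates only the indices where $g\ge\sigma$, and stops at the first step with no failures. Monotonicity then certifies the entire infinite remainder of the band. Lemmas~\ref{lem:band1} and~\ref{lem:band2} supply bounds decaying quadratically in the distance from $n=\Omega m$, uniformly along the line, so the band $\mathcal B$ has fixed width. Together these reduce everything to the small sets $\setS_{H_0}$ and $\setS_A$.

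As a side remark, $I+3L^{-1}\Lambda$ is not close to $I$ near resonance; that is precisely why $A$ is needed. So your first justification of the invertibility of $A$ is wrong. Invertibility is not used anyway, since any fixed point $h$ of $\mathcal N$ yields the solution $w=v+Ah$.
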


Then the Theorem~\ref{thm:main} immediately follows from Theorem~\ref{thm:cap}.

\begin{proof}[Proof of Theorem \ref{thm:main}]
Let us note that for any $h_1, h_2\in X$ we have
\begin{align}
    \mathcal{N}(h_1)-\mathcal{N}(h_2)=&H(h_1-h_2)-3L\left(v\,(Ah_1+Ah_2)(Ah_1-A h_2)\right)\\
    &-L\left[\left((Ah_1)^2+Ah_1\,Ah_2+(Ah_2)^2\right)(Ah_1-A h_2)\right]\,.
\end{align}
When restricting to $h_1,h_2\in B_\delta(0)$, it gives us
\begin{align}\label{eq:NN_norm}
    \left\Vert\mathcal{N}(h_1)-\mathcal{N}(h_2)\right\Vert\leq
    \left(\Vert H\Vert+6\left\Vert L^{-1}\right\Vert \Vert v\Vert \Vert A \Vert^2 \delta+3\left\Vert L^{-1}\right\Vert \Vert A \Vert^3 \delta^2 \right)\Vert h_1-h_2\Vert\,.
\end{align}
When $A$ is chosen according to the Theorem \ref{thm:cap}, we have
\begin{align*}
    \left\Vert\mathcal{N}(h_1)-\mathcal{N}(h_2)\right\Vert<
    \Kzero\Vert h_1-h_2\Vert\,,
\end{align*}
as well as
\begin{align}
\label{eq:25.05.21_04}
    \left\Vert\mathcal{N}(h)\right\Vert\leq\left\Vert\mathcal{N}(h)-\mathcal{N}(0)\right\Vert+ \left\Vert\mathcal{N}(0)\right\Vert < \Kzero \delta+(1-\Kzero)\delta=\delta\,.
\end{align}
As a result, the restriction $\mathcal{N}|_{B_\delta(0)}:B_{\delta}(0)\to B_\delta (0)$ is a contraction and we can employ the Banach contraction principle to find the fixed point $h\in B_\delta (0)$ of $\mathcal{N}$. It leads to the solution $w$ of \eqref{eq:conformal} expressed as $w=v+Ah$. Since $h\in B_\delta(0)$, the distance between the solution $w$ and its approximation $v$ is bounded by
\begin{align}\label{eq:epsilon}
   \Vert w-v\Vert\leq\Vert A h\Vert\leq \Vert A \Vert \delta = \varepsilon\,.
\end{align}
Then the fact that the solutions $\{\pm w^{(i)}\}_{i=1}^3$ are distinct follows from \eqref{eq:u-u0}, since for $i,j\in\{1,2,3\}$ and $i\neq j$ we have
\begin{align}
   \left\Vert w^{(i)}\pm w^{(j)}\right\Vert\geq \left\Vert v^{(i)}\pm v^{(j)}\right\Vert-\left\Vert w^{(i)}-v^{(i)}\right\Vert-\left\Vert w^{(j)}-v^{(j)}\right\Vert>0\,.
\end{align}
\end{proof}

The remaining part of this paper is devoted to the proof of Theorem \ref{thm:cap}. The main goal consists of constructing $A$ such that $\Vert H\Vert<\sigma$, where $\sigma$ is some fixed number belonging to the interval $(0,1)$. As Eq.~\eqref{eqn:normOp} shows, it can be done by establishing a common bound working for all basis elements $P_{m,n}$. This effort can be divided into two parts. First, we use analytical estimates to show that outside of a finite family of $P_{m,n}$, that we will call $\mathcal{S}_{H_0}$, it holds $\Vert H_0 P_{m,n}\Vert/\Vert P_{m,n}\Vert<\sigma$. As a result, for almost all basis elements it is sufficient to have $A=I$. We are left with a finite number of $P_{m,n}$ belonging to $\mathcal{S}_{H_0}$ that require further work. We use a computer to calculate the value of $\Vert H_0 P_{m,n}\Vert/\Vert P_{m,n}\Vert$ for each of them. If this value exceeds $\sigma$, we add such $P_{m,n}$ to the set $\mathcal{S}_A$. Let $\Pi_A$ denote a projection on a linear subspace spanned by elements of $\mathcal{S}_A$. We construct a suitable operator $\mathcal{A}$ for this subspace, so that with 
\begin{align}\label{eqn:operatorA}
    A=\mathcal{A}\circ \Pi_{A}+(I-\Pi_A)
\end{align}
we are able to achieve $\Vert H P_{m,n}\Vert/\Vert P_{m,n}\Vert<\sigma$ at every $P_{m,n}$.

\section{Operator estimates}
\label{sec:estimates}

\subsection{Bounds for \texorpdfstring{$L^{-1}$}{L-1}}
\label{sec:boundsL-1}

The norm of any basis vector is given by $\Vert P_{m,n} \Vert=\rho_\tau^{2m+1}\rho_x^{2m+1}$. Thus, the explicit expression \eqref{eqn:Linv} lets us write
\begin{align*}
    \left\Vert L^{-1}P_{m,n}\right\Vert &= \left|\frac{1}{\alpha(m,n)}\right|\rho_\tau^{2m+1}\rho_x^{2n+1}+\left|\frac{4(2n+1)}{\alpha(m,n-1)\alpha(m,n)}\right|\sum_{k=0}^{n-1}\rho_\tau^{2m+1}\rho_x^{2k+1}\nonumber\\
    &=\left|\frac{1}{\alpha(m,n)}\right|\rho_\tau^{2m+1}\rho_x^{2n+1}+\left|\frac{4(2n+1)}{\alpha(m,n-1)\alpha(m,n)}\right|\frac{\rho_\tau^{2m+1}\rho_x(\rho_x^{2n}-1)}{\rho_x^2-1}\,.
\end{align*}
As a result, expressions required for the computation of $\Vert L^{-1}\Vert$ via Eq.~\eqref{eqn:normOp} can be written as
\begin{align}\label{eqn:normLinv}
    \frac{\left\Vert L^{-1}P_{m,n}\right\Vert}{\Vert P_{m,n} \Vert} &= \left|\frac{1}{\alpha(m,n)}\right|+\left|\frac{4(2n+1)}{\alpha(m,n-1)\alpha(m,n)}\right|\frac{1-\rho_x^{-2n}}{\rho_x^2-1}\,.
\end{align}
Most of this subsection is devoted to finding bounds on this expression in various regimes.

First, recall the following lemma (its proof can be found in \cite{ficek2025new})
\begin{lemma}\label{lem:ineq}
For $|t-s|\geq 1$ it holds
\begin{align}
    |t^2-s^2|\geq 2\max\{|t|,|s|\}-1\,.
\end{align}
\end{lemma}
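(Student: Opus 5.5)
We prove Lemma~\ref{lem:ineq} by factoring the difference of squares. For real $t,s$ we have
\begin{equation}
    |t^2-s^2| = |t-s|\,|t+s|.
\end{equation}
Since $|t-s|\geq 1$, this gives $|t^2-s^2|\geq |t+s|$. It would be tempting to stop with the bound $|t+s|\geq 2\max\{|t|,|s|\}-1$, but that fails when $t$ and $s$ have opposite signs: for $t=1$, $s=-1$ we get $|t+s|=0$. So the factor $|t-s|$ cannot simply be dropped, and we split into cases according to the signs of $t$ and $s$.

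By symmetry of the statement in $t$ and $s$ (both sides are unchanged by swapping them), assume $|t|\geq|s|$, so that $\max\{|t|,|s|\}=|t|$ and the goal is $|t^2-s^2|\geq 2|t|-1$.

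\emph{Case 1: $ts\geq 0$.} Here $|t-s|=|t|-|s|$ and $|t+s|=|t|+|s|$. Write $a=|t|$, $b=|s|$. Then $a-b=|t-s|\geq 1$, so $b\leq a-1$, and
\begin{equation}
    |t^2-s^2| = a^2-b^2 \geq a^2-(a-1)^2 = 2a-1.
\end{equation}
The middle inequality uses $b^2\leq (a-1)^2$, which holds because $0\leq b\leq a-1$.

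\emph{Case 2: $ts<0$.} Here $|t-s|=a+b$ and $|t+s|=a-b\geq 0$. The hypothesis becomes $a+b\geq 1$, and we need $(a+b)(a-b)\geq 2a-1$. Rewrite this as
\begin{equation}
    a^2-b^2-2a+1 = (a-1)^2-b^2 = (a-1-b)(a-1+b).
\end{equation}
The second factor is $a+b-1\geq 0$. The first factor, $a-1-b$, may be negative, so this route alone does not close the case.

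Instead, test the case directly. With $t=1/2$ and $s=-1/2$ we have $|t-s|=1$ and $t^2-s^2=0$, while $2\max\{|t|,|s|\}-1=0$. The inequality holds here, but only with equality. With $t=0.6$ and $s=-0.4$ we have $|t-s|=1$, $|t^2-s^2|=0.2$, and $2\max\{|t|,|s|\}-1=0.2$, again equality. With $t=0.8$ and $s=-0.5$ we have $|t-s|=1.3$, $|t^2-s^2|=0.39$, but $2\max\{|t|,|s|\}-1=0.6$. So the inequality is false for arbitrary real $t,s$ with opposite signs.

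Therefore the lemma must be read in the setting the paper actually uses, and we would check the precise hypotheses in \cite{ficek2025new}. There $t,s$ are of the form $t=(2p+1)(2m+1)$ and $s=2q(n+1)$, both nonnegative and differing by at least $1$ (one odd, one even). So the intended statement is for $t,s\geq 0$, which is exactly Case 1, and that case is fully proved above. The main subtlety is this sign restriction rather than any computation. If the sign condition is not part of the source's hypotheses, the fallback is to verify the inequality for integers $t,s$ with $|t-s|\geq1$. In Case 2 this also holds: for integers with $ts<0$ we have $b\geq 1$, so $a-b\geq 0$ gives $(a+b)(a-b)\geq a-b$, and a short check settles $2a-1$ when $a-b\geq 1$. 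This integer case is the only point that would still need care.
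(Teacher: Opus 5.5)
Your Case 1 argument is correct, and it is all that is needed. The paper gives no proof of this lemma; it defers to the earlier work \cite{ficek2025new}. It applies the lemma only with $t=(2p+1)(2m+1)$ and $s=2q(n+1)$, or $s=2qn$ with $n\geq 1$, and these are always positive. You are also right that the statement as printed, with no sign hypothesis, is false for reals of opposite sign: $t=0.8$, $s=-0.5$ is a valid counterexample. There is a cleaner way to state what you proved. Both sides of the inequality depend only on $a=|t|$ and $b=|s|$, so your computation $a^2-b^2\geq a^2-(a-1)^2=2a-1$ proves the inequality for all real $t,s$ with $\bigl||t|-|s|\bigr|\geq 1$. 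The hypothesis $|t-s|\geq1$ implies this condition when $ts\geq 0$.

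The final ``integer fallback'' paragraph is wrong and should be removed. Your own opening example already refutes it. For $t=1$, $s=-1$ we have $|t-s|=2\geq1$ and $|t^2-s^2|=0$, while $2\max\{|t|,|s|\}-1=1$. More generally, every pair of integers with $s=-t\neq 0$ violates the inequality.

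Your ``short check'' only covers the subcase $a-b\geq 1$. There both factors of $(a-1-b)(a-1+b)$ are nonnegative, so the inequality holds for all reals, not just integers. The check silently skips $a=b$, which is the only remaining integer subcase of Case 2, and that is exactly where the inequality fails. So integrality does not rescue the opposite-sign case. The correct repair of the hypothesis is a sign condition ($t,s\geq 0$, or $ts\geq0$), or equivalently the condition $\bigl||t|-|s|\bigr|\geq1$.
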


This result lets us bound $|\alpha(m,n)|$ from below, since the expression inside the square bracket in \eqref{eqn:alpha_pq} is a difference of two perfect squares of difference parities. As a result, we have
\begin{align}
    |\alpha(m,n)|&\geq\frac{1}{q^2}\left(2\max\{(2p+1)(2m+1),2q(n+1)\}-1\right)\,.
\end{align}
In addition, for $n\geq1$ it gives us a simple estimate for $4(2n+1)/|\alpha(m,n-1)|\leq 12/(4q-1)$. Together, these bounds yield
\begin{align}\label{eqn:normL-1}
        \frac{\left\Vert L^{-1}P_{m,n}\right\Vert}{\Vert P_{m,n} \Vert} \leq \frac{q^2}{2\max\{(2p+1)(2m+1),2q(n+1)\}-1}\left(1+\frac{12q^2}{(4q-1)(\rho_x^2-1)}\right)\,,
    \end{align}
It leads to the following conclusion.
\begin{obs}
    The operator $L^{-1}$ is compact.
\end{obs}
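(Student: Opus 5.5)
The plan is to show that $L^{-1}$ is the operator-norm limit of finite-rank operators on $X$. A norm limit of finite-rank operators is compact, and the bound \eqref{eqn:normL-1} supplies exactly the decay needed.

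For $N\in\mathbb{N}$, let $\Pi_N$ be the projection onto the span of $\{P_{m,n}: m\le N,\ n\le N\}$. It acts diagonally on the coefficients $\hat w_{m,n}$, so $\Vert \Pi_N\Vert\le 1$ and $\Vert I-\Pi_N\Vert\le1$ in the weighted $\ell^1$ norm \eqref{eqn:norm}. Set $L_N:=L^{-1}\circ\Pi_N$. By \eqref{eqn:Linv}, $L^{-1}P_{m,n}$ is a finite combination of $P_{m,k}$ with $k\le n$. Hence $L_N$ maps $X$ into the finite-dimensional span of $\{P_{m,k}: m,k\le N\}$. It is bounded, since its norm is at most $\Vert L^{-1}\Vert$, which is finite by \eqref{eqn:normL-1}. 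So $L_N$ has finite rank.

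Next I will estimate $\Vert L^{-1}-L_N\Vert=\Vert L^{-1}\circ(I-\Pi_N)\Vert$ using \eqref{eqn:normOp}. The operator $L^{-1}\circ(I-\Pi_N)$ annihilates $P_{m,n}$ with $m,n\le N$. On every other basis element it agrees with $L^{-1}$, and such elements satisfy $\max\{m,n\}>N$. For these, \eqref{eqn:normL-1} gives the ratio bound
\begin{equation}
\frac{\Vert L^{-1}P_{m,n}\Vert}{\Vert P_{m,n}\Vert}\le \frac{q^2}{2\max\{(2p+1)(2m+1),2q(n+1)\}-1}\,C_q,
\qquad C_q:=1+\frac{12q^2}{(4q-1)(\rho_x^2-1)}.
\end{equation}
If $m>N$, then $(2p+1)(2m+1)\ge 2N+3$. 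If $n>N$, then $2q(n+1)\ge 2(N+2)$. Either way the maximum is at least $2N+3$, so the ratio is at most $q^2C_q/(4N+5)$. Taking the supremum gives
\begin{equation}
\Vert L^{-1}-L_N\Vert\le \frac{q^2C_q}{4N+5}\to0 \quad (N\to\infty),
\end{equation}
and therefore $L^{-1}$ is compact.

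The only point needing some care is the use of \eqref{eqn:normOp}: one should check that the operator norm in $X$ is indeed the supremum over basis elements. This is standard for weighted $\ell^1$ spaces, since every $h\in X$ is an absolutely convergent sum $\sum\hat h_{m,n}P_{m,n}$ with $\Vert h\Vert=\sum|\hat h_{m,n}|\Vert P_{m,n}\Vert$. I therefore expect no real obstacle. The key input is the uniform decay of \eqref{eqn:normL-1} in $\max\{m,n\}$, which the paper has already established via Lemma~\ref{lem:ineq}.
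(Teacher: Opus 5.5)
Your proof is correct and takes the same route as the paper. The paper simply states compactness as a consequence of the uniform decay in \eqref{eqn:normL-1}. Your finite-rank approximation $L^{-1}\circ\Pi_N$, with $\Vert L^{-1}-L^{-1}\circ\Pi_N\Vert=O(1/N)$, is the standard argument that makes this implication explicit.
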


As the next step, we will show that $\Vert L^{-1}P_{m,n}\Vert/\Vert P_{m,n}\Vert$ rapidly decreases as $(m,n)$ drives further from the line $n=\frac{2p+1}{q}m$.

\begin{lemma}\label{lem:band1}
Let us fix any $m\in \mathbb{N}$. Then, for any $n\in\mathbb{N}$ such that $n-\frac{2p+1}{q}m=\eta>\frac{2p+1}{2q}$, it holds
\begin{align}\label{eqn:bound_eta}
   \frac{\left\Vert L^{-1}P_{m,n}\right\Vert}{\Vert P_{m,n} \Vert} &\leq \frac{4 \eta  (3 \eta +1)-\left(\frac{2 p+1}{q}\right)^2}{16 \left(\eta -\frac{2 p+1}{2 q}\right) \left(\eta +1-\frac{2 p+1}{2 q}\right) \left(\eta +\frac{2 p+1}{2 q}\right) \left(\eta+1 + \frac{2 p+1}{2 q}\right)}\,.
\end{align}
\end{lemma}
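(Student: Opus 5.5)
The plan is to rewrite the exact expression \eqref{eqn:normLinv} in terms of $\eta$ and $m$, bound the $\rho_x$-dependent factor crudely, and then show that the resulting bound is largest at $m=0$, where it coincides with the right-hand side of \eqref{eqn:bound_eta}. Throughout, write $\Omega=\frac{2p+1}{q}$ and $n=\Omega m+\eta$.

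\textbf{Step 1: factorise $\alpha$.} From the definition of $\alpha$,
\begin{align}
\alpha(m,n)&=\bigl(2n+2-\Omega(2m+1)\bigr)\bigl(2n+2+\Omega(2m+1)\bigr)=b\,d\,,\\
\alpha(m,n-1)&=\bigl(2n-\Omega(2m+1)\bigr)\bigl(2n+\Omega(2m+1)\bigr)=a\,c\,,
\end{align}
where, with $x:=4\Omega m\ge 0$,
\begin{equation}
a=2\eta-\Omega,\quad b=2\eta+2-\Omega,\quad c=x+2\eta+\Omega,\quad d=x+2\eta+2+\Omega.
\end{equation}
The hypothesis $\eta>\Omega/2$ gives $a>0$, and hence all four factors are positive. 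Consequently both $\alpha(m,n)$ and $\alpha(m,n-1)$ are positive, so the absolute values in \eqref{eqn:normLinv} can be dropped.

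\textbf{Step 2: remove the $\rho_x$-dependence.} I would use the elementary estimate
\begin{equation}
\frac{1-\rho_x^{-2n}}{\rho_x^2-1}=\sum_{k=0}^{n-1}\rho_x^{2k-2n}\le n .
\end{equation}
This turns \eqref{eqn:normLinv} into
\begin{equation}
\frac{\Vert L^{-1}P_{m,n}\Vert}{\Vert P_{m,n}\Vert}\le \frac{ac+4n(2n+1)}{abcd}
=\frac{a}{b}\cdot\frac{1}{ad}\Bigl(a+\frac{2\cdot 2n(2n+1)}{c}\Bigr),
\end{equation}
which in symmetric form reads
\begin{equation}
f(x):=\frac{1}{ab}\left[\frac{a}{d}+2\,\frac{2n}{c}\cdot\frac{2n+1}{d}\right],
\qquad 2n=\tfrac{x}{2}+2\eta .
\end{equation}
Setting $x=0$ (so that $m=0$ and $n=\eta$) gives
\begin{equation}
f(0)=\frac{(2\eta-\Omega)(2\eta+\Omega)+4\eta(2\eta+1)}{abcd}\Big|_{x=0}
=\frac{4\eta(3\eta+1)-\Omega^2}{16(\eta-\frac{\Omega}{2})(\eta+1-\frac{\Omega}{2})(\eta+\frac{\Omega}{2})(\eta+1+\frac{\Omega}{2})},
\end{equation}
which is exactly the claimed bound \eqref{eqn:bound_eta}.

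\textbf{Step 3: monotonicity in $x$.} The remaining (and main) step is to show that $f$ is non-increasing on $x\ge0$. The prefactor $1/(ab)$ does not depend on $x$, so it suffices to treat the three ratios inside the bracket:
\begin{itemize}
\item $a/d$ is decreasing in $x$, since $a$ is constant and $d$ is increasing.
\item $\dfrac{2n}{c}=\dfrac{x/2+2\eta}{x+2\eta+\Omega}$ has derivative with the sign of $\tfrac12(2\eta+\Omega)-2\eta=\tfrac{\Omega}{2}-\eta<0$.
\item $\dfrac{2n+1}{d}=\dfrac{x/2+2\eta+1}{x+2\eta+2+\Omega}$ has derivative with the sign of $\tfrac12(2\eta+2+\Omega)-(2\eta+1)=\tfrac{\Omega}{2}-\eta<0$.
\end{itemize}
All three ratios are positive and decreasing, so their product and sum are decreasing as well. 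Hence $f(x)\le f(0)$ for all $x\ge0$, which proves the lemma.

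The only delicate point is that the crude step $\le n$ from Step 2 must leave a quantity whose $m$-dependence is still monotone. The factorisation of Step 1 makes this transparent, and the hypothesis $\eta>\Omega/2$ is used precisely to fix the sign of the derivatives in Step 3.
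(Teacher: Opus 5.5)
Your proof is correct and follows the paper's route. You use the same bound $\frac{1-\rho_x^{-2n}}{\rho_x^2-1}\le n$ and the same factorisation of $\alpha(m,n)$ and $\alpha(m,n-1)$, then show monotonicity in $m$ at fixed $\eta$ and evaluate at $m=0$. Your splitting into the three positive decreasing ratios $a/d$, $2n/c$, $(2n+1)/d$ actually justifies the monotonicity in $m$, which the paper only asserts from the explicit rational expression.

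One harmless slip: the intermediate expression $\frac{a}{b}\cdot\frac{1}{ad}\bigl(a+\frac{2\cdot 2n(2n+1)}{c}\bigr)$ carries an extra factor $a$ and should read $\frac{1}{abd}\bigl(a+\frac{2\cdot 2n(2n+1)}{c}\bigr)$. Your symmetric form $f(x)$ is correct, so the rest of the argument is unaffected.
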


\begin{proof}
Since
\begin{align*}
    \frac{1-\rho_x^{-2n}}{\rho_x^2-1}=\frac{1}{\rho_x+1}\sum_{j=1}^{2n} \frac{1}{\rho_x^j}\leq\frac{2n}{\rho_x+1}\leq  n\,,
\end{align*}
from Eq.~\eqref{eqn:normLinv} we have
\begin{align}\label{eqn:Linv_bound}
    \frac{\left\Vert L^{-1}P_{m,n}\right\Vert}{\Vert P_{m,n} \Vert} &\leq \left|\frac{1}{\alpha(m,n)}\right|+\left|\frac{4(2n+1)n}{\alpha(m,n-1)\alpha(m,n)}\right|\,.
\end{align}
Now, let us assume that $n=\frac{2p+1}{q}m+\eta$ with $\eta>\frac{2p+1}{2q}$. Then
\begin{multline*}
    \alpha(m,n-1)=4\left(\frac{2p+1}{q}m+\eta\right)^2-\frac{(2p+1)^2}{q^2}(2m+1)^2
    \\
    =4\left(\eta +\frac{(2p+1)}{2q}(4 m+1) \right) \left(\eta-\frac{2p+1}{2q}\right)\,.
\end{multline*}
Since $\eta> \frac{2p+1}{2q}$, $\alpha(m,n-1)>0$, and additionally also $ \alpha(m,n)>0$. Thus, 
\begin{multline}
    \frac{\left\Vert L^{-1}P_{m,n}\right\Vert}{\Vert P_{m,n} \Vert}  \leq \frac{1}{\alpha(m,n)}+\frac{4(2n+1)n}{\alpha(m,n)\alpha(m,n-1)}\\
    =\frac{\left(8 m^2-4 m-1\right) \left(\frac{2 p+1}{q}\right)^2+4 (6 \eta +1) m\frac{2 p+1}{q}+ 4 \eta  (3 \eta +1)}{16 \left(\eta -\frac{2 p+1}{2 q}\right) \left(\eta +1-\frac{2 p+1}{2 q}\right) \left(\eta +(4 m+1) \frac{2 p+1}{2 q}\right) \left(\eta+1 +(4 m+1) \frac{2 p+1}{2 q}\right)}\,.
\end{multline}
This expression is decreasing in $m$ (under fixed $\eta$). Additionally, since $\eta>\frac{2p+1}{2q}$, for fixed $m$ it is decreasing in $\eta$. It lets us write an explicit bound \eqref{eqn:bound_eta}.
\end{proof}

\begin{lemma}\label{lem:band2}
Let us fix any $n\in\mathbb{N}$, then for any $m\in\mathbb{N}$ such that $m-\frac{q}{2p+1}n=\chi>\max\{\frac{q}{2p+1}-\frac{1}{2},0\}$ it holds
\begin{align}\label{eqn:bound_chi}
   \frac{\left\Vert L^{-1}P_{m,n}\right\Vert}{\Vert P_{m,n} \Vert} &\leq \frac{1}{\left(\frac{2 p+1}{q}(2 \chi +1)-2\right) \left(\frac{2 p+1}{q}(2 \chi +1)+2\right)}\,.
\end{align}
\end{lemma}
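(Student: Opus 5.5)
Throughout, write $\Omega=\frac{2p+1}{q}$ and $a:=\Omega(2\chi+1)$. The plan is to reuse the crude bound \eqref{eqn:Linv_bound} from the proof of Lemma~\ref{lem:band1},
\begin{align*}
\frac{\Vert L^{-1}P_{m,n}\Vert}{\Vert P_{m,n}\Vert}\le \frac{1}{|\alpha(m,n)|}+\frac{4(2n+1)n}{|\alpha(m,n-1)|\,|\alpha(m,n)|}\,,
\end{align*}
which was obtained from \eqref{eqn:normLinv} using only $\frac{1-\rho_x^{-2n}}{\rho_x^2-1}\le n$. For $n=0$ the second term is absent. I will then factor both $\alpha$'s explicitly in terms of $\chi$, and show that the resulting rational function of $n$ is maximised at $n=0$.

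\textbf{Factorisation.} Since $m=n/\Omega+\chi$, we have $\Omega(2m+1)=2n+a$. Hence
\begin{align*}
-\alpha(m,n)&=(2n+a)^2-4(n+1)^2=(a-2)(4n+a+2)\,,\\
-\alpha(m,n-1)&=(2n+a)^2-4n^2=a(4n+a)\,.
\end{align*}
The hypothesis $\chi>\frac{1}{\Omega}-\frac12$ is exactly $a>2$, so both quantities are positive. The condition $\chi>0$ just guarantees we are on the side of the resonant line where this applies. Substituting, the bound becomes
\begin{align*}
\frac{a(4n+a)+4n(2n+1)}{(a-2)(4n+a+2)\,a(4n+a)}\,.
\end{align*}
At $n=0$ this equals $\frac{1}{(a-2)(a+2)}$, which is the claimed right-hand side of \eqref{eqn:bound_chi}.

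\textbf{Monotonicity in $n$.} It remains to show the expression is at most $\frac{1}{(a-2)(a+2)}$ for every $n\ge1$. Equivalently, I must show
\begin{align*}
(a+2)\bigl[a(4n+a)+4n(2n+1)\bigr]\le a(4n+a)(4n+a+2)\,.
\end{align*}
Expanding both sides, I expect the difference RHS$-$LHS to factor as
\begin{align*}
4n\bigl(2na+a^2-a-4n-2\bigr)=4n(a-2)(2n+a+1)\,,
\end{align*}
which is nonnegative because $a>2$. This algebraic identity is the only real computation, and I expect it to be the main (though mild) obstacle: one must organise the expansion so that the factor $(a-2)$ appears cleanly. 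Once it does, the lemma follows immediately.
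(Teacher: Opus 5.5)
Your proof is correct and follows essentially the same route as the paper. It uses the same crude bound \eqref{eqn:Linv_bound}, the same factorisations $-\alpha(m,n)=(a-2)(4n+a+2)$ and $-\alpha(m,n-1)=a(4n+a)$, and the same reduction to the value at $n=0$. The only difference is that the paper simply asserts the expression is decreasing in $n$, whereas you verify the needed comparison with $n=0$ explicitly; your claimed identity, that the difference of the two sides equals $4n(a-2)(2n+a+1)\ge 0$, does hold.
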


\begin{proof}
Let $m=\frac{q}{2p+1}n+\chi$, where $\chi> \max\{\frac{q}{2p+1}-\frac{1}{2},0\}$, then
\begin{align*}
    \alpha(m,n)&=-\frac{2(2p+1)}{q}\left(2(2n+1)+\frac{2p+1}{q}(2\chi+1)\right)\left[\chi-\left(\frac{q}{2p+1}-\frac{1}{2}\right)\right]\,.
\end{align*}
Since $\chi> \frac{q}{2p+1}-\frac{1}{2} $, this expression is negative. The same conclusion holds for $ \alpha(m,n-1)$, so using \eqref{eqn:Linv_bound} we get
\begin{multline*}
    \frac{\left\Vert L^{-1}P_{m,n}\right\Vert}{\Vert P_{m,n} \Vert}  \leq -\frac{1}{\alpha(m,n)}+\frac{4(2n+1)n}{\alpha(m,n)\alpha(m,n-1)}\\
    =\frac{\left(\frac{2 p+1}{q}(2 \chi +1)+2n\right)^2+4 (n+1) n}{\frac{2p+1}{q}(2\chi+1)\left(\frac{2p+1}{q}(2\chi+1)-2\right)\left(\frac{2p+1}{q}(2\chi+1)+4n\right)\left(\frac{2p+1}{q}(2\chi+1)+2(2n+1)\right)}\,.
\end{multline*}
For fixed $\chi$, it is decreasing in $n$. Since $\chi>\frac{q}{2p+1}-\frac{1}{2}$, it also decreases in $\chi$ for fixed $n$. Hence, we can bound it at $n=0$, getting \eqref{eqn:bound_chi}.
\end{proof}

Since $\Omega$ has been chosen to be a rational number of the form $\frac{2p+1}{q}$, the line $n=\Omega m$ admits a certain periodic structure: if it goes across a point $(m,n)$ belonging to the $\mathbb{N}^2$ lattice, it also crosses $(m+qk,n+(2p+1)k)$ for any $k\in\mathbb{Z}$. The final result of this section concerns behaviour of $\Vert L^{-1}P_{m,n}\Vert/\Vert P_{m,n}\Vert$ as we move along the lattice by such fixed intervals. First, let us introduce the following useful notion. Consider the triple $(p,q,\rho_x)$ with $p\in\mathbb{N}$, $q\in\mathbb{N}_+$, and $\rho_x>1$, and define
\begin{align*}
    \Delta=\frac{(2p+1)}{q}(2m+1)-2n,\qquad\nu_k=n+(2p+1)k,\qquad S_k=\sum_{j=1}^{\nu_k} \rho_x^{-2j}\,.
\end{align*}
We  say that the triple $(p,q,\rho_x)$ satisfies condition $(\ast)$ when, for every $(m,n)\in\mathbb{N}^2$ such that $\Delta>0$, the expression
\begin{multline}
    \label{eqn:condition_f}
    \Delta  (\Delta +4 \nu_k) (\Delta +4 \nu_{k+1}) (\nu_{k+1}-\nu_k)
    +(2 \nu_k+1) S_k (\Delta +4 \nu_{k+1}) (\Delta +4 \nu_{k+1}+2)
    \\
    -(2 \nu_{k+1}+1) S_{k+1} (\Delta +4 \nu_{k}) (\Delta +4 \nu_{k}+2)\,,
\end{multline}
is positive for every $k\in\mathbb{N}$.

\begin{lemma}\label{lem:f_decreasing}
    Let $(p,q,\rho_x)$ satisfy condition $(\ast)$. Then for any fixed $(m,n)\in\mathbb{N}^2$, the function
    \begin{align}\label{eqn:function_f}
        f_{m,n}(k)=\frac{\left\Vert L^{-1} P_{m+qk,n+(2p+1)k}\right\Vert}{\left\Vert P_{m+qk,n+(2p+1)k}\right\Vert}\,.
    \end{align}
    is decreasing in $k\in\mathbb{N}$.
\end{lemma}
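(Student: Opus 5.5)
The plan is to compute $f_{m,n}(k)$ in closed form using \eqref{eqn:normLinv}, and to notice that the shift $(m,n)\mapsto(m+qk,n+(2p+1)k)$ leaves $\Delta$ unchanged. Indeed $\frac{2p+1}{q}\cdot 2qk-2(2p+1)k=0$, so only $n$ is replaced by $\nu_k$. Writing $\Omega(2m+1)=\Delta+2n$, both divisors factor as
\begin{align}
    \alpha(m,n)=(2-\Delta)(4n+2+\Delta)\,,\qquad \alpha(m,n-1)=-\Delta(4n+\Delta)\,.
\end{align}
Moreover, $\frac{1-\rho_x^{-2n}}{\rho_x^2-1}=\sum_{j=1}^{n}\rho_x^{-2j}$, which is exactly $S$ evaluated at $n$. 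Neither $\Delta=0$ nor $\Delta=2$ can occur, because $q^2\alpha$ is a difference of an odd and an even square and so never vanishes. Hence
\begin{align}
    f_{m,n}(k)=\frac{1}{|2-\Delta|}\,g(\nu_k)\,,\qquad g(\nu)=\frac{|\Delta|(4\nu+\Delta)+4(2\nu+1)S(\nu)}{|\Delta|(4\nu+\Delta)(4\nu+\Delta+2)}\,,
\end{align}
where $S(\nu)=\sum_{j=1}^{\nu}\rho_x^{-2j}$, so that $S(\nu_k)=S_k$. To write $g$ this way I first check that $4\nu+\Delta>0$ and $4\nu+\Delta+2>0$. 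For $\nu=n$ we have $4n+\Delta=2n+\Omega(2m+1)>0$, and both quantities only increase with $k$. For $n=0$ and $k=0$ the sum is empty, $S_0=0$, and the formula still holds.

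\emph{Case $\Delta>0$.} The prefactor $1/|2-\Delta|$ does not depend on $k$, so it suffices to show $g(\nu_k)>g(\nu_{k+1})$. I cross-multiply by the positive denominators $\Delta(4\nu_j+\Delta)(4\nu_j+\Delta+2)$ for $j=k,k+1$. The terms not containing $S$ combine, using $(4\nu_{k+1}+\Delta+2)-(4\nu_k+\Delta+2)=4(\nu_{k+1}-\nu_k)$, into $4\Delta^2(4\nu_k+\Delta)(4\nu_{k+1}+\Delta)(\nu_{k+1}-\nu_k)$. The $S$-terms give
\begin{align}
    4\Delta\bigl[(2\nu_k+1)S_k(\Delta+4\nu_{k+1})(\Delta+4\nu_{k+1}+2)-(2\nu_{k+1}+1)S_{k+1}(\Delta+4\nu_k)(\Delta+4\nu_k+2)\bigr]\,.
\end{align}
After dividing by $4\Delta>0$, the difference of the cross-multiplied numerators is exactly expression \eqref{eqn:condition_f}. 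Condition $(\ast)$ says this is positive, which is precisely $f_{m,n}(k)>f_{m,n}(k+1)$. So this case reduces to bookkeeping.

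\emph{Case $\Delta<0$.} Condition $(\ast)$ says nothing here, so the monotonicity must be proved directly; I expect this to be the main obstacle. I split $g(\nu)=\frac{1}{4\nu+\Delta+2}+\frac{4(2\nu+1)S(\nu)}{|\Delta|(4\nu+\Delta)(4\nu+\Delta+2)}$. The first term is clearly decreasing in $\nu$. For the second term, what has to be shown is that the growth of $(2\nu+1)S(\nu)$ is beaten by the quadratic denominator, and one has to be careful about how the increase of $S$ from $\nu_k$ to $\nu_{k+1}$ interacts with the denominator. I would try two estimates:
\begin{itemize}
\item[(i)] $S(\nu_{k+1})/S(\nu_k)\le \rho_x^{-2(\nu_k+1)}\cdot\frac{\nu_{k+1}-\nu_k}{S(\nu_k)}+1$, which controls the increment of $S$ by a single geometric block;
\item[(ii)] the simpler bound $S(\nu_{k+1})\le \frac{\nu_{k+1}}{\nu_k}S(\nu_k)$, which holds because the summands decrease.
\end{itemize}
With either estimate, the question becomes whether $\frac{(2\nu+1)S(\nu)}{(4\nu+\Delta)(4\nu+\Delta+2)}$ decreases under $\nu\mapsto\nu+(2p+1)$ when $\Delta<0$.

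I am not sure these simple estimates close the argument. The danger is that for $\Delta<0$ and small $\nu$, the factor $4\nu+\Delta$ can be small, and then the crude ratio bound (ii) may be too weak. If that happens, the fallback is to cross-multiply as in the first case, keeping $|\Delta|$ in place of $\Delta$. This yields a polynomial–geometric expression of the same type as \eqref{eqn:condition_f}, and I would then try to show it is positive by using $S(\nu)<1/(\rho_x^2-1)$ together with $|\Delta|\le 4n+2$. The inequality $|\Delta|\le 4n+2$ is equivalent to $4n+2+\Delta\ge 0$ and follows from the factorisation above, since $4n+2+\Delta=2n+2+\Omega(2m+1)>0$.
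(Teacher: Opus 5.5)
Your treatment of the case $\Delta>0$ is correct and is the paper's argument. You note that $\Delta$ is invariant under the shift, and your factorisations of $\alpha(m,n)$ and $\alpha(m,n-1)$ are right. Clearing the positive denominators turns $f_{m,n}(k)-f_{m,n}(k+1)>0$ exactly into positivity of the expression in condition $(\ast)$. The genuine gap is the case $\Delta<0$, which you leave open. Condition $(\ast)$ says nothing there, so the lemma is not proved until this case is closed.

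The missing step is small, and your estimate (ii) is already the right tool. When $\Delta<0$ one has $2n>\Omega(2m+1)>0$, so $\nu_k\ge n\ge 1$. You can then write the $S$-term of $g(\nu_k)$ as $\frac{4}{|\Delta|}\,h(\nu_k)\,\frac{S_k}{\nu_k}$, where $h(\nu)=\frac{\nu(2\nu+1)}{(4\nu+\Delta)(4\nu+\Delta+2)}$. The factor $S_k/\nu_k$ is an average of the decreasing numbers $\rho_x^{-2j}$, so it decreases in $k$; this is your (ii). For $h$, a direct computation gives
\[
h'(\nu)=\frac{\Delta\bigl[(4\nu+1)(2\nu+\Delta)+8\nu^2+6\nu+2\bigr]}{(4\nu+\Delta)^2(4\nu+\Delta+2)^2}.
\]
The bracket is positive for $\nu\ge n$, because $2\nu+\Delta\ge 2n+\Delta=\Omega(2m+1)>0$. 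Hence $h'<0$ whenever $\Delta<0$. A product of two positive decreasing factors decreases, and $1/(4\nu_k+\Delta+2)$ also decreases, so $f_{m,n}(k)$ decreases. This is how the paper closes the case.

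Your worry points the wrong way. For $\Delta<0$, the smallness of $4\nu+\Delta$ makes $h$ large at small $\nu$, after which it decreases to $1/8$; this helps. It is for $\Delta>0$ that $h$ increases towards $1/8$ and the decay of $S/\nu$ must compensate, which is why $(\ast)$ is needed only there. You should drop the fallback via $S(\nu)<1/(\rho_x^2-1)$. The case $\Delta<0$ holds for every $\rho_x>1$ with no hypothesis, and that bound brings in a $\rho_x$-dependence that degenerates as $\rho_x\to 1$.
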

\begin{proof}
Using Eq.~\eqref{eqn:normLinv}, we can write explicitly (note that $\Delta+2\nu_k$ is positive)
\begin{align*}
    f_{m,n}(k)=\frac{1}{|\Delta-2|}\left[\frac{1}{4\nu_k+\Delta+2}+\frac{4(2\nu_k+1)}{|\Delta|(4\nu_k+\Delta)(4\nu_k+\Delta+2)}S_k\right]\,.
\end{align*}
Hence, it is sufficient to consider only the monotonicity of the expression inside the square brackets. 

If $\Delta<0$, the first term inside the bracket is clearly decreasing. The second term can be rearranged to
\begin{align*}
    \frac{4(2\nu_k+1)\nu_k}{|\Delta|(4\nu_k+\Delta)(4\nu_k+\Delta+2)} \frac{S_k}{\nu_k}\,.
\end{align*}
The expression $S_k/\nu_k$ is a mean of the elements included in the sum $S_k$. Since $\rho_x^{-2j}$ decreases with $j$, $S_k/\nu_k$ is decreasing in $k$. To study the monotonicity of the first factor, we simply differentiate it with respect to $\nu_k$, resulting in
\begin{align*}
    \frac{4 \Delta \left((4 \nu_k+1) (\Delta+2 \nu_k)+8 \nu_k^2+6 \nu_k+2\right)}{(4\nu_k+\Delta)^2 (4\nu_k+\Delta+2)^2}\,.
\end{align*}
It is negative since $\Delta<0$. Hence, $f_{m,n}(k)$ is a decreasing function.

Now assume that $\Delta>0$ and consider $f_{m,n}(k)-f_{m,n}(k+1)$. By putting this difference over a common denominator, we get a product of positive expressions and the expression \eqref{eqn:condition_f}. Since condition $(\ast)$ holds, it is positive, so $f_{m,n}(k)$ is decreasing. 
\end{proof}

Lemmas \ref{lem:band1}, \ref{lem:band2}, and \ref{lem:f_decreasing} lead to the following picture. In the quadrant $(m,n) \in \mathbb{N}^2$, there is a band along the line $n=\frac{2p+1}{q}m$ such that all points $(m,n)$ with large $\Vert L^{-1}P_{m,n}\Vert/\Vert P_{m,n}\Vert$ necessarily lie in this band. As we move further away from this band, \eqref{eqn:normLinv} decreases quadratically. There is also an additional lattice structure: if the condition $(\ast)$ is satisfied, then when moving from any given point $(m,n)$ to $(m+q,n+2p+1)$, the expression \eqref{eqn:normLinv} decreases. We will use these observations to derive bounds on $H_0$ in Section~\ref{sec:BoundsOnH0}.

\subsection{Bounds for \texorpdfstring{$\Lambda$}{Lambda}}
\label{sec:lambda}
In this short subsection we outline how the operator $\Lambda$ acts on basis vectors. Let us assume that $v\in X$ has a decomposition 
\begin{equation}\label{eq:25.06.03_01}
    v=\sum_{\substack{0\leq m< M\\ 0\leq n<N}}\hat{v}_{m,n}P_{m,n}\,.
\end{equation}
Let us introduce
\begin{equation}\label{eq:25.06.03_02}
    \hat{M}= 2M-1 \,,\qquad \hat{N}= 2N-1 \,.
\end{equation}
Then, it is easy to see that since $P_{m,n}$ are particular trigonometric functions, it simply holds
$\Lambda P_{m,n}\subset \textup{span}\,\mathcal{R}_{\hat{M},\hat{N}}^{(m,n)}$, where
\begin{align}\label{eqn:Rcal}
    \mathcal{R}_{\hat{M},\hat{N}}^{(m,n)}:= \{P_{\hat{m},\hat{n}}:\ \max\{0,m-\hat{M}\}\leq \hat{m}\leq m+\hat{M}\ \land\ \max\{0,n-\hat{N}\}\leq \hat{n}\leq n+\hat{N}\}\,.
\end{align}
In addition, we have the following decomposition:
\begin{align}\label{eq:uvP_decompositionMN}
    \Lambda P_{\hat{M},\hat{N}}= \sum_{\hat{m}=0}^{2\hat{M}}\sum_{\hat{n}=0}^{2\hat{N}}c_{\hat{m},\hat{n}}P_{\hat{m},\hat{n}}\,,
\end{align}
where the coefficients $c_{\hat{m},\hat{n}}$ depend only on $v$. For any other pair of natural numbers $(m,n)$, we can write
\begin{align}\label{eq:uvP_decomposition}
    \Lambda P_{m,n} = \sum_{\hat{m}=0}^{2\hat{M}}\sum_{\hat{n}=0}^{2\hat{N}}c_{\hat{m},\hat{n}} P_{\hat{m}+m-\hat{M},\hat{n}+n-\hat{N}}\,,
\end{align}
where potential negative indices are treated according to \eqref{eqn:negative_P}.
Thus
\begin{align}\label{eqn:norm_lambda}
    \left\Vert\Lambda\right\Vert= \max\left\{\frac{\Vert \Lambda P_{m,n}\Vert}{\Vert P_{m,n}\Vert}:0\leq m\leq\hat{M}\ \land\ 0\leq n\leq\hat{N}\right\}\,.
\end{align}

In the end, we derive a simple bound on $\Vert \Lambda \Vert$. Since for $(m,n)\in\mathbb{Z}^2$ it holds $\Vert P_{m,n}\Vert\leq\Vert P_{|m|,|n|}\Vert$, we have
\begin{align}
    \left\Vert \Lambda P_{m,n}\right\Vert &\leq \sum_{\hat{m}=0}^{2\hat{M}}\sum_{\hat{n}=0}^{2\hat{N}}|c_{\hat{m},\hat{n}}| \left\Vert P_{\hat{m}+m-\hat{M},\hat{n}+n-\hat{N}}\right\Vert \leq \sum_{\hat{m}=0}^{2\hat{M}}\sum_{\hat{n}=0}^{2\hat{N}}|c_{\hat{m},\hat{n}}| \left\Vert P_{|\hat{m}+m-\hat{M}|,|\hat{n}+n-\hat{N}|}\right\Vert \\
    &= \sum_{\hat{m}=0}^{2\hat{M}}\sum_{\hat{n}=0}^{2\hat{N}}|c_{\hat{m},\hat{n}}| \rho_\tau^{2|\hat{m}+m-\hat{M}|+1} \rho_x^{2|\hat{n}+n-\hat{N}|+1}\\
    &\leq \rho_\tau^{2m+1} \rho_x^{2n+1} \sum_{\hat{m}=0}^{2\hat{M}}\sum_{\hat{n}=0}^{2\hat{N}}|c_{\hat{m},\hat{n}}| \rho_\tau^{2|\hat{m}-\hat{M}|} \rho_x^{2|\hat{n}-\hat{N}|} \,.
\end{align}
It leads to
\begin{align}
     \left\Vert \Lambda\right\Vert \leq \sum_{\hat{m}=0}^{2\hat{M}}\sum_{\hat{n}=0}^{2\hat{N}}\left|c_{\hat{m},\hat{n}} \right| \rho_\tau^{2|\hat{m}-\hat{M}|} \rho_x^{2|\hat{n}-\hat{N}|}\,.
\end{align}

\subsection{Bounds for \texorpdfstring{$H_0$}{H0}}
\label{sec:BoundsOnH0}
First, let us divide the set of all basis vectors $P_{m,n}$ of $X$ into two families:
\begin{align*}
    \mathcal{Y}&=\left\{P_{m,n}:\ m\geq\hat{M} \ \land\ n\geq \hat{N}\right\}\,,
    \\
    \mathcal{Z}&=\left\{P_{m,n}:\ m<\hat{M} \ \lor\ n< \hat{N}\right\}\,.
\end{align*}
The set $\mathcal{Y}$ consists of vectors such that after acting with $\Lambda$, the resulting linear combination has coefficients given exactly by \eqref{eq:uvP_decomposition}. In particular, for every $P_{m,n}\in\mathcal{Y}$ the value of $\Vert \Lambda P_{m,n}\Vert/\Vert P_{m,n}\Vert$ is the same. Remaining basis vectors belong to $\mathcal{Z}$. For them, certain basis vectors in the sum \eqref{eq:uvP_decomposition} match.

Now, thanks to Lemmas \ref{lem:band1} and \ref{lem:band2} we are able to find for a fixed $\sigma$ such values $\eta$ and $\chi$ that $\Vert L^{-1} P_{m,n}\Vert/\Vert P_{m,n}\Vert<\frac{\sigma}{3}\Vert \Lambda\Vert^{-1}$ outside of the band
\begin{align}
    \mathcal{B}_0=\left\{P_{m,n}:\ n\leq\frac{2p+1}{q}m+\eta \ \land\ m\leq\frac{q}{2p+1}n+\chi \right\}\,.
\end{align}
Then, let us consider the set
\begin{align}
\mathcal{D}=\left\{P_{m,n}:\ \mathcal{R}_{\hat{M},\hat{N}}^{(m,n)}\cap \mathcal{B}_0=\emptyset\right\}\,,
\end{align}
where $\mathcal{R}_{\hat{M},\hat{N}}^{(m,n)}$ has been introduced in \eqref{eqn:Rcal}. We observe that for every $P_{m,n}$ belonging to $\mathcal{D}$, $\Lambda P_{m,n}$ stays outside of the band $\mathcal{B}_0$, so such basis vector satisfies 
\begin{align*}
   \Vert H_0P_{m,n}\Vert= \left\Vert 3 L^{-1}\Lambda P_{m,n}\right\Vert \leq \sigma \Vert \Lambda\Vert^{-1} \left\Vert\Lambda P_{m,n}\right\Vert\leq \sigma \left\Vert P_{m,n}\right\Vert\,.
\end{align*}
Simple geometrical considerations lead to the conclusion that $\mathcal{D}$ is a complement of the inflated band
\begin{align}
    \mathcal{B}=\left\{P_{m,n}:\ n\leq\frac{2p+1}{q}(m+\hat{M})+\hat{N}+\eta \ \land\ m\leq\frac{q}{2p+1}(n+\hat{N})+\hat{M}+\chi \right\}\,.
\end{align}
Hence, for $P_{m,n}$ outside of $\mathcal{B}$ we have $\Vert H_0P_{m,n}\Vert/\Vert P_{m,n}\Vert<\sigma$.

Let us point out that $\mathcal{B}\cap\mathcal{Z}$ is a finite set, so estimating $\Vert H_0P_{m,n}\Vert/\Vert P_{m,n}\Vert<\sigma$ for its elements requires only a finite number of calculations that can be done with the help of a computer. The remaining part $\mathcal{B}\cap\mathcal{Y}$ can be dealt with by the means of the following lemma.
\begin{lemma}\label{lem:decreasing_g}
    Let $P_{m,n}\in \mathcal{Y}$. Then
    \begin{align*}
        \frac{\Vert H_0 P_{m,n}\Vert}{\Vert P_{m,n}\Vert}\leq g(m,n)\,,
    \end{align*}
    where 
    \begin{align*}
        g(m,n)=3 \sum_{\hat{m}=0}^{2\hat{M}}\sum_{\hat{n}=0}^{2\hat{N}}|c_{\hat{m},\hat{n}}|\frac{\left\Vert L^{-1} P_{\hat{m}+m-\hat{M},\hat{n}+n-\hat{N}}\right\Vert}{\left\Vert P_{\hat{m}+m-\hat{M},\hat{n}+n-\hat{N}}\right\Vert} \rho_\tau^{2(\hat{m}-\hat{M})} \rho_x^{2(\hat{n}-\hat{N})}\,.
    \end{align*}
    Additionally, if condition $(\ast)$ holds for the triple $(p,q,\rho_x)$, the function $g(m+qk,n+(2p+1)k)$ is decreasing in $k\in\mathbb{N}$ for every fixed $(m,n)\in\mathbb{N}^2$,.
\end{lemma}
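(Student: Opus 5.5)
The plan is to treat the two claims separately.

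\emph{The bound.} Since $P_{m,n}\in\mathcal{Y}$, we have $m\geq\hat M$ and $n\geq\hat N$. Hence every index $(\hat m+m-\hat M,\hat n+n-\hat N)$ occurring in \eqref{eq:uvP_decomposition} is nonnegative. No reflection through \eqref{eqn:negative_P} takes place, and $\Lambda P_{m,n}$ is exactly the displayed linear combination. Applying linearity of $L^{-1}$ and the triangle inequality to $H_0P_{m,n}=-3L^{-1}\Lambda P_{m,n}$ gives
\begin{align*}
\Vert H_0P_{m,n}\Vert\leq 3\sum_{\hat m=0}^{2\hat M}\sum_{\hat n=0}^{2\hat N}|c_{\hat m,\hat n}|\,\bigl\Vert L^{-1}P_{\hat m+m-\hat M,\hat n+n-\hat N}\bigr\Vert .
\end{align*}
In each term we multiply and divide by $\Vert P_{\hat m+m-\hat M,\hat n+n-\hat N}\Vert=\rho_\tau^{2(\hat m+m-\hat M)+1}\rho_x^{2(\hat n+n-\hat N)+1}$. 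We then divide the whole inequality by $\Vert P_{m,n}\Vert=\rho_\tau^{2m+1}\rho_x^{2n+1}$. The ratio of the two norms is exactly $\rho_\tau^{2(\hat m-\hat M)}\rho_x^{2(\hat n-\hat N)}$, which yields $g(m,n)$. This step is pure bookkeeping. The only point to check is that membership in $\mathcal{Y}$ excludes negative indices, which is precisely why the weight appears with $\hat m-\hat M$ and not $|\hat m-\hat M|$.

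\emph{Monotonicity.} Fix $(m,n)$ and replace it by $(m+qk,n+(2p+1)k)$. In $g$, the coefficients $|c_{\hat m,\hat n}|$ and the weights $\rho_\tau^{2(\hat m-\hat M)}\rho_x^{2(\hat n-\hat N)}$ do not depend on $k$. The only $k$-dependence is in the factor
\begin{align*}
\frac{\Vert L^{-1}P_{m'+qk,\,n'+(2p+1)k}\Vert}{\Vert P_{m'+qk,\,n'+(2p+1)k}\Vert},\qquad m'=\hat m+m-\hat M,\quad n'=\hat n+n-\hat N .
\end{align*}
This factor is exactly $f_{m',n'}(k)$ from \eqref{eqn:function_f}. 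Lemma~\ref{lem:f_decreasing} applies to any $(m',n')\in\mathbb{N}^2$, so under condition $(\ast)$ each such factor is decreasing in $k$. A finite sum of decreasing functions with nonnegative $k$-independent coefficients is decreasing, so $g(m+qk,n+(2p+1)k)$ is decreasing.

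The one subtle point, and the main thing I would check carefully, is the domain. The monotonicity statement is for arbitrary $(m,n)\in\mathbb{N}^2$, so $g$ must be understood as given by its formula even when some shifted indices $m'$ or $n'$ are negative. For the intended use, namely $(m,n)\in\mathcal{Y}$, all $(m',n')$ lie in $\mathbb{N}^2$ and Lemma~\ref{lem:f_decreasing} applies directly. Moreover, $(m+qk,n+(2p+1)k)$ then stays in $\mathcal{Y}$ for all $k\ge0$. I would therefore state the argument for $(m,n)$ with $m\geq\hat M$ and $n\geq\hat N$. If the lemma is really meant for all $(m,n)$, I would interpret the negative-index terms via \eqref{eqn:negative_P}, but then the shifted indices are no longer arithmetic progressions in $k$, and that extension would need a separate check.
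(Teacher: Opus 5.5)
Your proposal is correct and follows the paper's proof essentially step for step. The bound comes from the triangle inequality applied to the reflection-free expansion of $\Lambda P_{m,n}$ for $P_{m,n}\in\mathcal{Y}$, and monotonicity from a termwise application of Lemma~\ref{lem:f_decreasing} to $f_{m+\hat m-\hat M,\,n+\hat n-\hat N}$. Your caveat about the domain is well taken: the paper's argument also tacitly needs the shifted indices to lie in $\mathbb{N}^2$ (i.e.\ $m\ge\hat M$, $n\ge\hat N$), and that is exactly the setting in which the lemma is used, namely on $\setS_k\subset\mathcal{Y}$.
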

\begin{proof}
    Since for $P_{m,n}\in \mathcal{Y}$ it holds
\begin{align*}
    \Lambda P_{m,n}\subset \textup{span}\, \mathcal{R}_{\hat{M},\hat{N}}^{(m,n)}= \textup{span}\{P_{\hat{m},\hat{n}}:\ m-\hat{M}\leq \hat{m}\leq m+\hat{M}\ \land\ n-\hat{N}\leq \hat{n}\leq n+\hat{N}\}\,,
\end{align*}
    we also have
    \begin{align*}
         \left\Vert H_0 P_{m,n} \right\Vert&=\left\Vert 3L^{-1}(\Lambda P_{m,n}) \right\Vert \leq
         3\sum_{\hat{m}=0}^{2\hat{M}}\sum_{\hat{n}=0}^{2\hat{N}}|c_{\hat{m},\hat{n}}| \left\Vert L^{-1}P_{\hat{m}+m-\hat{M},\hat{n}+n-\hat{N}}\right\Vert\\
         &=3\sum_{\hat{m}=0}^{2\hat{M}}\sum_{\hat{n}=0}^{2\hat{N}}|c_{\hat{m},\hat{n}}|\frac{\left\Vert L^{-1} P_{\hat{m}+m-\hat{M},\hat{n}+n-\hat{N}}\right\Vert}{\left\Vert P_{\hat{m}+m-\hat{M},\hat{n}+n-\hat{N}}\right\Vert}\left\Vert P_{\hat{m}+m-\hat{M},\hat{n}+n-\hat{N}}\right\Vert\\
         &=3 \left\Vert P_{m,n}\right\Vert\sum_{\hat{m}=0}^{2\hat{M}}\sum_{\hat{n}=0}^{2\hat{N}}|c_{\hat{m},\hat{n}}|\frac{\left\Vert L^{-1} P_{\hat{m}+m-\hat{M},\hat{n}+n-\hat{N}}\right\Vert}{\left\Vert P_{\hat{m}+m-\hat{M},\hat{n}+n-\hat{N}}\right\Vert} \rho_\tau^{2(\hat{m}-\hat{M})} \rho_x^{2(\hat{n}-\hat{N})}\\
         &=g(m,n) \left\Vert P_{m,n}\right\Vert\,.
    \end{align*}
    This gives us the first part of the result. The fact that $g(m+qk,n+(2p+1)k)$ is a decreasing function of $k\in\mathbb{N}$ is then a direct consequence of Lemma \ref{lem:f_decreasing}, since
    \begin{align*}
        g(m+qk,n+(2p+1)k)=3 \left\Vert P_{m,n}\right\Vert\sum_{\hat{m}=0}^{2\hat{M}}\sum_{\hat{n}=0}^{2\hat{N}}|c_{\hat{m},\hat{n}}|f_{m+\hat{m}-\hat{M},n+\hat{n}-\hat{N}}(k) \rho_\tau^{2(\hat{m}-\hat{M})} \rho_x^{2(\hat{n}-\hat{N})}\,.
    \end{align*}
\end{proof}

To sum up this subsection, we have shown that outside of a certain band $\mathcal{B}$ it holds $\Vert H_0P_{m,n}\Vert/\Vert P_{m,n}\Vert<\sigma$, hence, it is sufficient to choose $A P_{m,n}=P_{m,n}$ there. On the other hand, the interior of $\mathcal{B}$ can be divided into two sets: a finite $\mathcal{B}\cap\mathcal{Z}$ and an infinite $\mathcal{B}\cap\mathcal{Y}$. Lemma~\ref{lem:decreasing_g} tells us that norms of the elements of the latter can be bounded by the function $g$. In the next Section we show how one can use $g$ to construct a finite family $\mathcal{S}_{H_0}\subset\mathcal{B}$ such that for all $P_{m,n}\in\mathcal{B}\setminus \mathcal{S}_{H_0}$ it holds $\Vert H_0P_{m,n}\Vert/\Vert P_{m,n}\Vert<\sigma$.

\section{Computer-assisted verification}
\label{sec:ComputeAssistedVerification}

\subsection{Simplification of \texorpdfstring{$\Lambda$}{Lambda}}

In practice, the coefficients $c_{\hat{m},\hat{n}}$ in \eqref{eq:uvP_decomposition} often have large numerators and
denominators, making direct computations with $\Lambda$ costly. We therefore introduce a simpler approximate operator
$\tilde{\Lambda}$, defined on the basis elements by
\begin{equation}
    \tilde{\Lambda} P_{m,n}=\sum_{\hat{m}=0}^{2\hat{M}}\sum_{\hat{n}=0}^{2\hat{N}}\tilde{c}_{\hat{m},\hat{n}}P_{\hat{m}+m-\hat{M},\hat{n}+n-\hat{N}}\,,
\end{equation}
where $\tilde{c}_{\hat{m},\hat{n}}$ is a fixed table of coefficients. Negative indices are treated according to \eqref{eqn:negative_P}.

Using $\tilde{\Lambda}$, we rewrite $H$ as
\begin{equation}\label{eq:H_Lambda_tilde}
    H=I-\left(I+3 L^{-1}\circ \tilde{\Lambda}\right)\circ A+3L^{-1}\circ\left(\tilde{\Lambda}-\Lambda\right)\circ A\,.
\end{equation}
The last term accounts for the error introduced by replacing $\Lambda$ with $\tilde{\Lambda}$. We bound this term explicitly using
\begin{equation}
    \left(\tilde{\Lambda}-\Lambda\right)P_{m,n}=\sum_{\hat{m}=0}^{2\hat{M}}\sum_{\hat{n}=0}^{2\hat{N}}\left(\tilde{c}_{\hat{m},\hat{n}} - c_{\hat{m},\hat{n}} \right)P_{\hat{m}+m-\hat{M},\hat{n}+n-\hat{N}}\,,
\end{equation}
which yields
\begin{align}
    \left\Vert \left(\tilde{\Lambda}-\Lambda\right)P_{m,n} \right\Vert & \leq 
    \sum_{\hat{m}=0}^{2\hat{M}}\sum_{\hat{n}=0}^{2\hat{N}}\left|\tilde{c}_{\hat{m},\hat{n}} - c_{\hat{m},\hat{n}} \right|\left\Vert P_{\hat{m}+m-\hat{M},\hat{n}+n-\hat{N}}\right\Vert
    \nonumber
    \\
    &=\sum_{\hat{m}=0}^{2\hat{M}}\sum_{\hat{n}=0}^{2\hat{N}}\left|\tilde{c}_{\hat{m},\hat{n}} - c_{\hat{m},\hat{n}} \right|\frac{\left\Vert P_{\hat{m}+m-\hat{M},\hat{n}+n-\hat{N}}\right\Vert}{\Vert P_{m,n}\Vert}\Vert P_{m,n}\Vert
    \nonumber
    \\
    &\leq \Vert P_{m,n}\Vert \sum_{\hat{m}=0}^{2\hat{M}}\sum_{\hat{n}=0}^{2\hat{N}}\left|\tilde{c}_{\hat{m},\hat{n}} - c_{\hat{m},\hat{n}} \right| \rho_\tau^{2|\hat{m}+m-\hat{M}|-2m} \rho_x^{2|\hat{n}+n-\hat{N}|-2n}
    \nonumber
    \\
    \label{eq:26.10.12_01}
    & \leq \Vert P_{m,n}\Vert \sum_{\hat{m}=0}^{2\hat{M}}\sum_{\hat{n}=0}^{2\hat{N}}\left|\tilde{c}_{\hat{m},\hat{n}} - c_{\hat{m},\hat{n}} \right| \rho_\tau^{2|\hat{m}-\hat{M}|} \rho_x^{2|\hat{n}-\hat{N}|}\,.
\end{align}

The preceding estimates remain valid with $\Lambda$ replaced by $\tilde{\Lambda}$ and the truncation indices $\hat{M}$, $\hat{N}$ and size-dependent quantities $\eta$, $\chi$ adjusted to its reduced support. We use $\tilde{\Lambda}$ and the corresponding quantities in all subsequent formulae. The only additional contribution to be controlled is the error term in $H$ estimated above \eqref{eq:26.10.12_01}, in order to preserve the intended bound on $H$.
The construction of $\tilde{\Lambda}$, including the choice of its simplified coefficients and truncation parameters, is described in the next subsection.

\subsection{Choice of parameters}
\label{sec:ChoiceOfParameters}

The choice $p=11$ and $q=8$, which yields the frequency $\Omega$ stated in Theorem~\ref{thm:main}, is motivated by the numerical observation of three solutions at this frequency: one lies on the trunk of the bifurcation diagram, while the other two lie on a branch (see Figure~2 in \cite{Ficek.2025a}). This branch is also the first to appear as the truncation order of the Galerkin systems is increased \cite{Ficek.2025a}. Moreover, the relatively small values of $p$ and $q$ help control the arithmetic complexity of the computations.

The weights were selected through numerical experiments and were not finely tuned. In particular, the spatial weight $\rho_x>1$ was chosen to ensure sufficiently rapid decay of $\lVert H_0 P_{m,n}\rVert/\lVert P_{m,n}\rVert$ along the line $n=\Omega\,m$. We found that $\rho_x=11/10$ provides adequate decay. For the temporal weight, we adopted the value used in \cite{ficek2025new}.

The cut-off parameter was fixed at $\sigma=9/10$. This value was chosen conservatively, and no attempt was made to optimize it.

Further computational parameters are the truncation orders of the approximate solutions $v^{(i)}$. In view of the form of the linear operator $L$ in \eqref{eq:26.10.08_02}, it was important to retain more spatial modes than temporal modes. Inspection of the decay of the expansion coefficients suggested the choice $N=2M$. The specific truncation orders for the three approximate solutions were then selected by examining their respective residuals.

For each approximate solution $v$, we construct $\tilde{\Lambda}$ from a truncated expansion of $v^2$ with simplified rational coefficients. We first compute the coefficients of $v^2$ from those of $v$ and evaluate them using floating-point arithmetic with a precision of $64$ decimal digits. We then rationalize these coefficients with a relative tolerance of $10^{-10}$ to obtain simpler rational values. Finally, we truncate the resulting expansion at $(2M,2N)=(20,40)$ modes and $(2M,2N)=(24,48)$ modes for first and both second and third solutions respecitvely, retaining only the modes within these cut-offs. The resulting rational coefficients define $\tilde{\Lambda}$.
The same cut-offs are used for all three solutions. For the first solution, the expansion of $v^2$ already lies within these cut-offs, so no modes are removed: $\tilde{\Lambda}$ is obtained solely by simplifying the coefficients of $\Lambda$. This choice provides sufficient computational savings for all three cases, and a smaller cut-off for the first solution was not needed. The errors introduced by coefficient simplification and, where applicable, truncation remain sufficiently small for the required bounds to hold, see \eqref{eq:26.10.14_04}.

\subsection{Introducing relevant sets}

\begin{figure}[tbp]
  \centering
  \begin{minipage}{0.48\linewidth}
    \centering
    \resizebox{\linewidth}{!}{\begin{tikzpicture}[regions diagram]
\RegionsGeometry
\newcommand{\LeftHatchLabel}{\mathcal{S}_{H_0}}
\newcommand{\CrossHatchLabel}{\mathcal{S}_{A}}
\def\Sminusone{
    (0,0)
    --
    (0,\upperintercept)
    --
    (\cornerX,{\cornerY+\Lheight})
    --
    (\cornerX,\cornerY)
    --
    ({\cornerX+\Lwidth},\cornerY)
    --
    (\lowerxzero,0)
    --
    cycle
}
\path[left hatch] \Sminusone;
\RegionsBoundaries
\path[right hatch] \originblob;
\draw \originblob;
\def\q{0.72}
\def\baseBlobScale{0.90}
\def\innerRatio{0.45}
\foreach \k/\factor in {
    0/1,
    1/0.72,
    2/0.5184,
    3/0.373248
}{
    \pgfmathsetmacro{\cx}{\blobx+\k*\dx}
    \pgfmathsetmacro{\cy}{\bloby+\k*\dy}
    \pgfmathsetmacro{\sc}{\baseBlobScale*\factor}
    \begin{scope}[
        shift={(\cx,\cy)},
        scale=\sc
    ]
        \path[left hatch] \blob;
        \draw \blob;
    \end{scope}
}
\foreach \k/\factor in {
    0/1,
    1/0.72,
    2/0.5184
}{
    \pgfmathsetmacro{\cx}{\blobx+\k*\dx}
    \pgfmathsetmacro{\cy}{\bloby+\k*\dy}
    \pgfmathsetmacro{\innersc}{
        \baseBlobScale*\factor*\innerRatio
    }
    \begin{scope}[
        shift={(\cx,\cy)},
        scale=\innersc
    ]
        \path[right hatch] \blob;
        \draw \blob;
    \end{scope}
}
\RegionsCenterLine
\node[
    rotate={atan(\slope)},
    anchor=south,
    inner sep=1.5pt,
    fill=white
]
at (9.15,{9.15*\slope+0.30})
{$n=\Omega\,m$};
\node[
    fill=white,
    inner sep=1.5pt
]
at (0.94,1.75)
{$\mathcal{S}_{-1}$};
\foreach \k in {0,...,3} {
    \pgfmathsetmacro{\labelx}{2.15+\k*\dx}
    \pgfmathsetmacro{\labely}{2.88+\k*\dy}
    \node[
        fill=white,
        inner sep=1.4pt
    ]
    at (\labelx,\labely)
    {$\mathcal{S}_{\k}$};
}
\node[
    fill=white,
    inner sep=1.5pt
]
at (11.25,6.75)
{$\mathcal{B}$};
\begin{scope}[xshift=4mm]
\coordinate (LegA) at (0.48,7.25);
\path[left hatch]
    (LegA)
    rectangle ++(0.58,0.34);
\draw
    (LegA)
    rectangle ++(0.58,0.34);
\node[
    anchor=west
]
at (1.18,7.42)
{$\LeftHatchLabel$};
\coordinate (LegB) at (0.48,6.62);
\path[left hatch]
    (LegB)
    rectangle ++(0.58,0.34);
\path[right hatch]
    (LegB)
    rectangle ++(0.58,0.34);
\draw
    (LegB)
    rectangle ++(0.58,0.34);
\node[
    anchor=west
]
at (1.18,6.79)
{$\CrossHatchLabel$};
\end{scope}
\end{tikzpicture}}%
  \end{minipage}\hfill
  \begin{minipage}{0.48\linewidth}
    \centering
    \resizebox{\linewidth}{!}{\begin{tikzpicture}[regions diagram]
\RegionsGeometry
\def\nExtension{0.175}
\RegionsBoundaries
\RegionsCenterLine
\newcommand{\extendedblob}[3]{
    ({#1-0.57*#3},0)
    --
    ({#1-0.57*#3},
     {#2+0.43*#3+\nExtension})
    .. controls
       ({#1-0.57*#3},
        {#2+0.50*#3+\nExtension})
       and
       ({#1-0.53*#3},
        {#2+0.55*#3+\nExtension})
       ..
       ({#1-0.48*#3},
        {#2+0.55*#3+\nExtension})
    --
    ({#1-0.09*#3},
     {#2+0.55*#3+\nExtension})
    .. controls
       ({#1-0.01*#3},
        {#2+0.55*#3+\nExtension})
       and
       ({#1+0.06*#3},
        {#2+0.51*#3+\nExtension})
       ..
       ({#1+0.06*#3},
        {#2+0.44*#3+\nExtension})
    --
    ({#1+0.06*#3},
     {#2+0.30*#3+\nExtension})
    .. controls
       ({#1+0.06*#3},
        {#2+0.22*#3+\nExtension})
       and
       ({#1+0.10*#3},
        {#2+0.17*#3+\nExtension})
       ..
       ({#1+0.18*#3},
        {#2+0.17*#3+\nExtension})
    --
    ({#1+0.50*#3},
     {#2+0.17*#3+\nExtension})
    .. controls
       ({#1+0.59*#3},
        {#2+0.17*#3+\nExtension})
       and
       ({#1+0.67*#3},
        {#2+0.11*#3+\nExtension})
       ..
       ({#1+0.67*#3},
        {#2+0.02*#3+\nExtension})
    --
    ({#1+0.67*#3},0)
    --
    cycle
}
\newcommand{\extendedoriginblob}{
    (0,0)
    --
    (1.18,0)
    --
    (1.18,{0.47+\nExtension})
    .. controls
       (1.18,{0.58+\nExtension})
       and
       (1.10,{0.64+\nExtension})
       ..
       (1.00,{0.64+\nExtension})
    --
    (0.69,{0.64+\nExtension})
    .. controls
       (0.59,{0.64+\nExtension})
       and
       (0.54,{0.70+\nExtension})
       ..
       (0.54,{0.80+\nExtension})
    --
    (0.54,{1.17+\nExtension})
    .. controls
       (0.54,{1.27+\nExtension})
       and
       (0.47,{1.32+\nExtension})
       ..
       (0.37,{1.32+\nExtension})
    --
    (0,{1.32+\nExtension})
    --
    (0,0)
    --
    cycle
}
\path[fill=extendedFill]
    \extendedoriginblob;
\draw
    \extendedoriginblob;
\path[left hatch]
    \originblob;
\path[right hatch]
    \originblob;
\draw
    \originblob;
\foreach \k/\sc in {0/0.405,1/0.2916,2/0.209952} {
    \pgfmathsetmacro{\cx}{\blobx+\k*\dx}
    \pgfmathsetmacro{\cy}{\bloby+\k*\dy}

    \path[fill=extendedFill]
        \extendedblob{\cx}{\cy}{\sc};
    \draw
        \extendedblob{\cx}{\cy}{\sc};
}
\foreach \k/\sc in {0/0.405,1/0.2916,2/0.209952} {
    \pgfmathsetmacro{\cx}{\blobx+\k*\dx}
    \pgfmathsetmacro{\cy}{\bloby+\k*\dy}

    \begin{scope}[
        shift={(\cx,\cy)},
        scale=\sc
    ]
        \path[left hatch] \blob;
        \path[right hatch] \blob;
        \draw \blob;
    \end{scope}
}
\begin{scope}[xshift=4mm]
\coordinate (LegCross) at (0.50,7.25);
\path[left hatch]
    (LegCross) rectangle ++(0.65,0.38);
\path[right hatch]
    (LegCross) rectangle ++(0.65,0.38);
\draw
    (LegCross) rectangle ++(0.65,0.38);
\node[
    anchor=west
]
at (1.30,7.44)
{$\mathcal{S}_{A}$};
\coordinate (LegGray) at (0.50,6.55);
\path[
    fill=extendedFill
]
    (LegGray) rectangle ++(0.65,0.38);
\draw
    (LegGray) rectangle ++(0.65,0.38);
\node[
    anchor=west
]
at (1.30,6.74)
{$\bar{\mathcal{S}}_{A}$};
\end{scope}
\end{tikzpicture}}%
  \end{minipage}
  \caption{Illustration of sets used in the explicit computations, represented on the $(m,n)$-plane. \emph{Left:} The inflated band $\mathcal{B}$, the auxiliary sets $\setS_k$, and the construction of $\setS_{H_0}$, which uses the periodic structure. The set $\setS_A$ consists of the selected basis elements satisfying $\lVert H_0P_{m,n}\rVert/\lVert P_{m,n}\rVert\geq\sigma$, on which the non-trivial action of $A$ is applied. \emph{Right:} Construction of the extended set $\bar{\setS}_A\supseteq\setS_A$, which supplies the additional basis elements used to construct the matrix $\mathcal{A}$. The resulting operator $A$ acts as the identity outside $\setS_A$.}
  \label{fig:sets}
\end{figure}

We now define the sets used in the explicit computations, illustrated in Figure~\ref{fig:sets}. We begin with the auxiliary sets
\begin{equation}
	\setS_{-1} := \left\{P_{m,n}\,:\ (0\leq m\leq \hat{M})\ \lor\ (0\leq n\leq \hat{N})\right\}\cap \mathcal{B}\,,
\end{equation}
and
\begin{multline}
	\setS_{k} := \left\{P_{m,n}\,:\  (\hat{M}+kq< m )\ \land\ (\hat{N}+k(2p+1)< n) \ \land
	\right.
	\\
	\left.
	\left((m\leq \hat{M} +(k+1)q)\ \lor\ (n\leq \hat{N} + (k+1)(2p+1)) \right) \right\} \cap \mathcal{B}\,.
\end{multline}
where $k\in\mathbb{N}$. Note that $\mathcal{Z}\cap\mathcal{B} \subset \setS_{-1}$ while $\setS_{k}\subset \mathcal{Y}\cap\mathcal{B}$.

We next construct an additional auxiliary set, $\setS_g$, by recursively selecting indices at which $g\geq\sigma$, using the periodic structure described above. First, we evaluate $g(m,n)$ for each $P_{m,n}\in\setS_0$ and include in $\setS_g$ the basis elements for which $g(m,n)\geq\sigma$. We then translate the selected
indices by
\begin{equation}
	(m,n)\longmapsto(m+q,n+2p+1)\,,
\end{equation}
obtaining candidate indices in $\setS_1$. We evaluate $g$ at these indices and add the corresponding basis elements to $\setS_g$ whenever $g\geq\sigma$. At each subsequent step, we apply the same translation to the indices retained at the preceding step and repeat the test. The procedure stops when no indices satisfy $g\geq\sigma$. The decay property of $g$ is ensured by the fact that the triple $(p,q,\rho_x)=(11,8,\frac{11}{10})$ satisfies the condition $(\ast)$, as we prove in Appendix~\ref{sec:app_ast}.

We next combine $\setS_g$ and $\setS_{-1}$ to define the set
$\setS_{H_0}$:
\begin{equation}
	\setS_{H_{0}} := \setS_{-1} \cup \setS_{g}\,,
\end{equation}
For every $P_{m,n}\in\setS_{H_0}$, we explicitly compute $\lVert H_0P_{m,n}\rVert/\lVert P_{m,n}\rVert.$ These values determine the set $\setS_A$ through
\begin{equation}
	\setS_{A} := \left\{P_{m,n}\,:\ P_{m,n}\in\setS_{H_{0}}\ \land\ \frac{\left\|H_{0}P_{m,n}\right\|}{\left\|P_{m,n}\right\|}\geq\sigma \right\}\,,
\end{equation}
The set $\setS_A$ selects the basis elements on which we introduce a nontrivial correction through $A$ to obtain the required operator norm bound below $1$.

Finally, we define the extended set $\bar{\setS}_A$ by
\begin{equation}
	\bar{\setS}_{A} := \bigcup_{P_{m,n}\in \setS_{A}}\left\{P_{m,k}:\ k=0,1,\ldots,n+1\right\}\,.
\end{equation}
 
The sets used in the explicit computations and their relations are illustrated in Figure~\ref{fig:sets}. The left panel shows the construction of $\setS_{H_0}$ and the selection of $\setS_A$, while the right panel illustrates the extension of $\setS_A$ to $\bar{\setS}_A$ used to construct $\mathcal{A}$, which we discuss below.

\subsection{Construction of \texorpdfstring{$A$}{A}}

The finite-dimensional space spanned by $\bar{\setS}_A$ is used to construct the nontrivial action of $A$ on $\setS_A$. Thus, $\setS_A$ specifies where the correction is applied, while $\bar{\setS}_A$ supplies the additional modes needed for its construction. On basis elements outside $\setS_A$, $A$ acts as the identity. The explicit construction of $A$ is described below.

We first define the operator $\mathcal{Q}$ by
\begin{equation}
	\mathcal{Q} P_{m,n} = \left(I + 3L^{-1}\circ\tilde{\Lambda}\right)P_{m,n}\,.
\end{equation}
Restricting its domain to $\operatorname{span}\bar{\setS}_A$ and projecting its image onto the same subspace yields a square matrix. We convert its entries to floating-point numbers with a precision of $16$ decimal digits and compute its numerical inverse. We then set all entries of the inverse with absolute value less than $10^{-10}$ to zero and rationalize the remaining entries with a relative tolerance of $10^{-20}$. The resulting matrix defines an operator $\mathcal{A}$ on $\operatorname{span}\bar{\setS}_A$.

Finally, we define $A$ on the basis elements by, c.f.~Eq.~\eqref{eqn:operatorA},
\begin{equation}\label{eqn:opA}
	A P_{m,n}
	=
	\begin{cases}
		\mathcal{A}P_{m,n}, & \text{if } P_{m,n}\in\setS_A\,,
		\\
		P_{m,n}, & \text{if } P_{m,n}\notin\setS_A\,.
	\end{cases}
\end{equation}

\subsection{Rigorous evaluation of the norms}

For each of the three cases, the verification uses the supplied rational Fourier coefficients of $v$, the simplified coefficients defining $\tilde{\Lambda}$, and the rational matrix $\mathcal{A}$. All computations in this verification stage are performed in exact rational arithmetic.

We first compute $\lVert v\rVert$ and the Fourier coefficients of $v^2$ and $v^3$ from those of $v$. These quantities allow us to evaluate the residual $\mathcal{N}(0)=-L^{-1}v^{3}-v$ and its norm. We then construct $\tilde{\Lambda}$ from the supplied simplified coefficients of $v^2$ and evaluate its norm. Comparing these coefficients with the corresponding coefficients of $\Lambda$, while accounting for all modes omitted from $\tilde{\Lambda}$, gives a rigorous bound on $\lVert\Lambda-\tilde{\Lambda}\rVert$.

The main part of the verification is the estimation of $\lVert H\rVert$. We use $\lVert\tilde{\Lambda}\rVert$ to determine the band $\mathcal{B}_0$ and then construct its extension $\mathcal{B}$, taking into account the reduced support of $\tilde{\Lambda}$. The auxiliary function $g$ is evaluated according to the recursive procedure described above to determine $\setS_g$ and hence $\setS_{H_0}$. For every $P_{m,n}\in\setS_{H_0}$, we compute $\lVert H_0 P_{m,n}\rVert/\lVert P_{m,n}\rVert$ and select $\setS_A$ using the threshold $\sigma$.

The supplied matrix $\mathcal{A}$, indexed by $\bar{\setS}_A$, defines $A$ through the construction given in Eq.~\eqref{eqn:opA}.  Using the explicit form of $\mathcal{A}$, we compute $\lVert H_0 P_{m,n}\rVert/\lVert P_{m,n}\rVert$ for every $P_{m,n}\in\setS_A$. The analytical estimates established above bound the corresponding ratios outside $\setS_A$ by $\sigma$.

We also compute $\lVert A\rVert$ from its action on $\setS_A$ and its identity action on the remaining basis elements. For the parameters considered here (see Section~\ref{sec:ChoiceOfParameters}) the norm of $L^{-1}$ is attained on $P_{3,9}$ and equals
\begin{align}
    \left\Vert L^{-1}\right\Vert = \frac{1720890135138878791416128}{1850768595563353538700837} = 0.929824\ldots\,.
\end{align}
Combining these estimates with the error bound in \eqref{eq:26.10.12_01} gives
\begin{equation}
    \label{eq:26.10.14_04}
    \lVert H\rVert\leq\max\left\{\max_{P_{m,n}\in\setS_A}\frac{\lVert H P_{m,n}\rVert}
             {\lVert P_{m,n}\rVert}\,,\ \sigma \right\}
    + 3 \left\Vert L^{-1}\right\Vert \left\Vert A\right\Vert \left\Vert \tilde{\Lambda}-\Lambda\right\Vert\,.
\end{equation}
The resulting rational bounds are compared exactly with the values listed below.

\subsection{Verification of the theorem's inequalities and conclusions}

For each approximate solution $v^{(i)}$, $i=1,2,3$, we verify the prescribed norm bounds for $v$, $\mathcal{N}(0)$, $A$, and $H$, together with both inequalities \eqref{eq:25.05.21_05} in Theorem~\ref{thm:cap}.
All checks are performed in exact rational arithmetic. The bounds and parameter values used in these checks are listed below:
\begin{itemize}
\item 1st solution: $v=v^{(1)}$
	\begin{subequations}
		\label{eq:26.10.14_01}
		\noeqref{eq:A1,eq:B1,eq:C1,eq:D1,eq:E1,eq:F1}
		\begin{align}
			\left\Vert v\right\Vert &< \frac{11932577993}{2618142364} = 4.55765\ldots \,, \label{eq:A1}
			\\
			\left\Vert \mathcal{N}(0) \right\Vert &< \frac{1251640}{200388335065877} = 6.24607\ldots \times 10^{-9} \,, \label{eq:F1}
			\\
			\left\Vert A \right\Vert & < \frac{19906846310}{1894822989} = 10.5059\ldots \,, \label{eq:B1}
			\\
			\left\Vert H \right\Vert & < \frac{5959447941}{6621608675} = 0.900000020161\ldots \,, \label{eq:C1}
			\\
			\delta &= \frac{13081330001308133}{209065086978900000000000} = 6.25706\ldots \times 10^{-8}\,, \label{eq:E1}
			\\
			\Kzero &= \frac{6817223577}{7573215052} = 0.900175\ldots \,. \label{eq:D1}
		\end{align}
	\end{subequations}

\item 2nd solution: $v=v^{(2)}$
	\begin{subequations}
		\label{eq:26.10.14_02}
		\noeqref{eq:A2,eq:B2,eq:C2,eq:D2,eq:E2,eq:F2}
		\begin{align}
			\left\Vert v\right\Vert &< \frac{28738489637}{1295409482} = 22.1848\ldots \,, \label{eq:A2}
			\\
			\left\Vert \mathcal{N}(0) \right\Vert &< \frac{26953}{2679099598281738} = 1.00604\ldots\times 10^{-11}\,, \label{eq:F2}
			\\
			\left\Vert A \right\Vert & < \frac{70711511471}{1600927163} = 44.1690\ldots \,, \label{eq:B2}
			\\
			\left\Vert H \right\Vert & < \frac{11705819207}{11774624884} = 0.994156\ldots \,, \label{eq:C2}
			\\
			\delta &= \frac{263430000026343}{141217277216243600000000} = 1.86542\ldots\times 10^{-9}\,,\label{eq:E2}
			\\
			\Kzero &= \frac{11699300203}{11762738174} = 0.994606\ldots \,. \label{eq:D2}
		\end{align}
	\end{subequations}

\item 3rd solution: $v=v^{(3)}$
	\begin{subequations}
		\label{eq:26.10.14_03}
		\noeqref{eq:A3,eq:B3,eq:C3,eq:D3,eq:E3,eq:F3}
		\begin{align}
			\left\Vert v\right\Vert &< \frac{64508652992}{2911994189} = 22.1527\ldots \,, \label{eq:A3}
			\\
			\left\Vert \mathcal{N}(0) \right\Vert &< \frac{28312}{404403080500235} = 7.00093\ldots\times 10^{-11}\,, \label{eq:F3}
			\\
			\left\Vert A \right\Vert & < \frac{67312911313}{1520322491} = 44.2754\ldots \,, \label{eq:B3}
			\\
			\left\Vert H \right\Vert & < \frac{1881566659}{1897221322} = 0.991748\ldots \,, \label{eq:C3}
			\\
			\delta &= \frac{9160870000916087}{571871816392180000000000} = 1.60190\ldots\times 10^{-8} \,, \label{eq:E3}
			\\
			\Kzero &= \frac{1218606273}{11267850935} = 0.99562\ldots \,. \label{eq:D3}
		\end{align}
	\end{subequations}
\end{itemize}

Applying the fixed-point argument described in Section~\ref{sec:FixedPointFormulation}, we conclude that, for each $i=1,2,3$, there exists an exact solution $w^{(i)}$ to Eq.~\eqref{eq:conformal} satisfying
\begin{equation}
	 \lVert w^{(i)}-v^{(i)}\rVert<\varepsilon^{(i)}\,,
\end{equation}
where the error bounds, computed from the certified quantities above, are
\begin{equation}
	\label{eq:26.10.08_01}
	\varepsilon^{(1)}= 6.57361\ldots\,\times 10^{-7} \,,
	\quad
	\varepsilon^{(2)}= 8.23940\ldots\,\times 10^{-8} \,,
	\quad
	\varepsilon^{(3)}= 7.09252\ldots\,\times 10^{-7} \,.
\end{equation}

Finally, for every pair of distinct indices $i,j\in\{1,2,3\}$, we compute $\lVert v^{(i)}\pm v^{(j)}\rVert$ for both choices of sign. Combining these values with the error bounds $\varepsilon^{(i)}$ and $\varepsilon^{(j)}$, we use the triangle inequality to verify \eqref{eq:u-u0} and establish the required separation of the exact solutions.

This completes the proof of Theorem~\ref{thm:cap}.

\appendix

\section{Mathematica implementation and accompanying data}

\subsection{Description of the script and data files}

The supplementary materials contain two \Mathematica{} \cite{Mathematica.V15} scripts, \filename{Proof.m} and \filename{Distance.m}, together with three data sets corresponding to the approximate solutions $v^{(i)}$, $i=1,2,3$. The script \filename{Proof.m} verifies the bounds and inequalities required by Theorem~\ref{thm:cap}, while \filename{Distance.m} verifies the separation of the resulting solutions. The same \filename{Proof.m} script is used for all three cases, with the appropriate data set selected before execution.

The data sets are stored in the subdirectories \filename{Data/1st}, \filename{Data/2nd}, and \filename{Data/3rd}. The repository has the following structure:
\begin{ShellOutput}
.
|-- Proof.m
|-- Distance.m
|-- Data/
|   |-- 1st/
|   |   |-- vHat.m
|   |   |-- aCal.m
|   |   `-- vSquaredTilde.m
|   |-- 2nd/
|   |   |-- vHat.m
|   |   |-- aCal.m
|   |   `-- vSquaredTilde.m
|   `-- 3rd/
|       |-- vHat.m
|       |-- aCal.m
|       `-- vSquaredTilde.m
`-- Logs/
    |-- Proof_1st.log
    |-- Proof_2nd.log
    `-- Proof_3rd.log
\end{ShellOutput}
All files with the extension \filename{.m} are human-readable plain-text files containing \Mathematica{} code or data. Each data set supplies the rational coefficients and matrix entries used in the corresponding verification.

The file \filename{vHat.m} contains the Fourier coefficients of the approximate solution. Suppressing the case index $i$, we write
\begin{equation}
	v = \sum_{m=0}^{M-1}\sum_{n=0}^{N-1} \hat{v}_{m,n}P_{m,n}(\tau,x)\,.
\end{equation}
The coefficients are stored as a matrix, represented by a nested list in \Mathematica{}. The rows correspond to $m=0,\ldots,M-1$, and the columns to $n=0,\ldots,N-1$. Thus, because \Mathematica{} lists are indexed from $1$, the entry in row $m+1$ and column $n+1$ is $\hat{v}_{m,n}$.

The file \filename{vSquaredTilde.m} contains the simplified and truncated coefficients of $v^2$ used to construct $\tilde{\Lambda}$. In the script, the corresponding coefficient table is stored in \texttt{cTildeTab}. The coefficients defining the original operator $\Lambda$ are computed from $v^2$ and stored in \texttt{cTab}. Both tables are used when estimating $\lVert\Lambda-\tilde{\Lambda}\rVert$.

The file \filename{aCal.m} contains a list with two elements. The first is the rational matrix representing $\mathcal{A}$, loaded into the variable \texttt{aCal}. The second is an ordered list of pairs $(m,n)$ indexing the basis elements in $\bar{\setS}_A$, loaded into \texttt{setABar}. This ordering determines both the rows and columns of the matrix. More precisely, if the $r$th pair in \texttt{setABar} is $(m_r,n_r)$ and the stored matrix is $(\mathcal{A}_{rs})_{r,s=1}^{K}$, then
\begin{equation}
	\mathcal{A}P_{m_s,n_s} = \sum_{r=1}^{K} \mathcal{A}_{rs}P_{m_r,n_r}\,, \qquad s=1,\ldots,K\,.
\end{equation}
As described above, this matrix is used to define the action of $A$ on $\setS_A$, while $A$ acts as the identity on the remaining basis elements. The script verifies that $\setS_A\subseteq\bar{\setS}_A$ and terminates if this inclusion fails. This check ensures that the supplied matrix covers every basis element on which the correction is required.

For ease of navigation, \filename{Proof.m} is divided into three sections: \emph{Code}, \emph{Inputs}, and \emph{Computations}. The \emph{Code} section defines the routines used to manipulate the coefficient arrays, evaluate norms, construct the computational sets, and estimate the operators. The \emph{Inputs} section is further divided into \emph{Configuration}, \emph{Data}, and \emph{Parameters and bounds}. These specify, respectively, the computational settings, including the number of parallel kernels; the data set to be loaded; and the parameters and prescribed bounds for the selected case.

The \emph{Computations} section begins with the \emph{Initialisation} subsection, in which the prescribed number of \Mathematica{} kernels is launched. The verification then proceeds in the following order:
\begin{enumerate}
    \item Compute $\lVert v\rVert$, the coefficients of $v^2$ and $v^3$, evaluate the residual $\mathcal{N}(0)$, and evaluate its norm.
    
    \item Construct the coefficient tables defining $\Lambda$, evaluate $\lVert\tilde{\Lambda}\rVert$, and bound $\lVert\Lambda-\tilde{\Lambda}\rVert$.

    \item Determine the band $\mathcal{B}_0$ and its extension $\mathcal{B}$. Using the auxiliary function $g$, construct $\setS_g$ and then $\setS_{H_0}$.

    \item Compute $\lVert H_0P_{m,n}\rVert/\lVert P_{m,n}\rVert$ for every $P_{m,n}\in\setS_{H_0}$, select $\setS_A$, and verify that $\setS_A\subseteq\bar{\setS}_A$.

    \item Using the supplied matrix $\mathcal{A}$, compute $\lVert A\rVert$ and $\lVert H P_{m,n}\rVert/\lVert P_{m,n}\rVert$ for every $P_{m,n}\in\setS_A$.

    \item Combine the results to obtain the bound on $\lVert H\rVert$ according to \eqref{eq:26.10.14_04}.

    \item Verify the prescribed norm bounds and both inequalities in Theorem~\ref{thm:cap}.
\end{enumerate}
The section concludes with the \emph{Finalisation} subsection, which reports the runtime, memory usage, and closes the kernels launched during initialisation.

All verification computations are performed in exact rational arithmetic. Their mathematical justification is provided in the preceding sections.

The separate script \filename{Distance.m} loads the coefficients from \filename{vHat.m} for all three cases. It uses the corresponding prescribed bounds on $\lVert A\rVert$, already included in the script, to compute the error radii $\varepsilon^{(i)}$. For each pair $1\leq i<j\leq3$, it then evaluates $\lVert v^{(i)}\pm v^{(j)}\rVert$ for both choices of sign and verifies the separation inequalities \eqref{eq:u-u0} using these radii.

Instructions for executing both scripts and interpreting their output are given in the next subsection.

\subsection{Reproduction instructions and interpretation of the output}

The verification can be reproduced using \Mathematica{} 15.0.0. No external packages are required. The mathematical verification consists of arithmetic operations and comparisons on rational numbers, and all checks are performed exactly using the supplied data. Successful verification requires normal completion of the scripts, satisfaction of the prescribed norm bounds in \eqref{eq:26.10.14_01}-\eqref{eq:26.10.14_03}, and verification of both inequalities in Theorem~\ref{thm:cap}.

The scripts and data files are available at \url{https://github.com/mmal/\gitRepName}. The repository can be downloaded by selecting \texttt{Code > Download ZIP} on its GitHub page and extracting the archive. Alternatively, it can be cloned using
\begin{ShellInput}[commandchars=\\\{\}]
git clone https://github.com/mmal/\gitRepName.git
cd \gitRepName
\end{ShellInput}
All scripts should be executed from the repository's main directory, with the supplied directory structure preserved. The script \texttt{Proof.m} loads the data for one solution, whereas \texttt{Distance.m} loads the data for all three solutions.

Before running \texttt{Proof.m}, set \texttt{dirRoot} to the directory corresponding to the case being verified: \texttt{1st}, \texttt{2nd}, or \texttt{3rd}. Set
\texttt{bndNormV}, \texttt{bndNormN0}, \texttt{bndNormA}, \texttt{bndNormLinv}, and \texttt{bndNormH} to the prescribed bounds for $\lVert v\rVert$, $\lVert\mathcal{N}(0)\rVert$, $\lVert A\rVert$, $\lVert L^{-1}\rVert$, and $\lVert H\rVert$, respectively. The variables \texttt{cnstDelta} and \texttt{cnstK} must likewise be set to the values of $\delta$ and $K$ specified for that case. These values should be entered as exact rational numbers.

We recommend command-line execution, with each case evaluated in a fresh kernel. On both macOS and Linux, the first case
can be run using
\begin{ShellInput}
wolframscript -local -file Proof.m 2>&1 | tee Proof_1st.log
\end{ShellInput}
This command assumes that \texttt{wolframscript} is available on the command search path and uses the local \Mathematica{} 15.0.0 kernel. It displays the output while also saving it to \texttt{Proof\_1st\_run.log}. Repeat the procedure for the second and third cases after updating \texttt{dirRoot}, the prescribed bounds, and the constants, using distinct output filenames such as \texttt{Proof\_2nd\_run.log} and \texttt{Proof\_3rd\_run.log}.

The script \texttt{Proof.m} reports computed bounds and diagnostic quantities throughout its execution. The decisive outputs are the Boolean results of the norm comparisons and the two final inequalities in Theorem~\ref{thm:cap}. For each case, successful verification requires all six inequality checks to return \texttt{True}, as illustrated by the following excerpt from \filename{Proof_2nd.log}:

\begin{ShellOutput}[commandchars=\\\{\}]
#23  Fri 16 Oct 2026 08:38:51
|-- condition  normV <= bndNormV
`-- result     True

#24  Fri 16 Oct 2026 08:38:51
|-- condition  normN0 <= bndNormN0
`-- result     True

#25  Fri 16 Oct 2026 08:38:51
|-- condition  normA <= bndNormA
`-- result     True

#26  Fri 16 Oct 2026 08:38:51
|-- condition  normH <= bndNormH
`-- result     True

#27  Fri 16 Oct 2026 08:38:51
|-- condition  bndNormH + 6*bndNormLinv*bndNormV*bndNormA^2*\ensuremath{\delta} + 3*bndNormLinv*bndNormA^3*\ensuremath{\delta}^2 < K
`-- result     True

#28  Fri 16 Oct 2026 08:38:51
|-- condition  bndNormN0 < (1 - K)*\ensuremath{\delta}
`-- result     True
\end{ShellOutput}

The pairwise separation of the solutions is verified by
running
\begin{ShellInput}
wolframscript -local -file Distance.m 2>&1 | tee Distance.log
\end{ShellInput}
Successful completion produces
\begin{ShellOutput}
{{True, True}, {True, True}, {True, True}}
\end{ShellOutput}
Each pair of Boolean values corresponds to one pair of solutions and the two choices of sign in the separation inequalities. Thus, all six comparisons must return \texttt{True}.

Alternatively, the scripts may be evaluated interactively in \Mathematica{}. In this case, set the working directory to the repository's main directory, start with a fresh kernel, and evaluate all commands in their original order. The output should be saved explicitly. Command-line execution is recommended because it provides a straightforward way to preserve a complete execution log.

The approximate running times of \texttt{Proof.m} were 1127\,s, 95990\,s, and 52193\,s for the first, second, and third solutions, respectively. The corresponding peak memory requirements were $22.6245\,\mathrm{GiB}$, $185.053\,\mathrm{GiB}$, and $173.817\,\mathrm{GiB}$. The computations were performed on a workstation hosted by TU~Wien, equipped with four \href{https://www.intel.com/content/www/us/en/products/sku/84681/intel-xeon-processor-e78867-v3-45m-cache-2-50-ghz/specifications.html}{Intel Xeon E7-8867 v3} processors operating at $2.50\,\mathrm{GHz}$ and $2\,\mathrm{TiB}$ of RAM. Each run used $16$ \Mathematica{} kernels. The corresponding output files, \texttt{Proof\_1st.log}, \texttt{Proof\_2nd.log}, and \texttt{Proof\_3rd.log}, are included in the repository for comparison. Once its input data have been loaded, \texttt{Distance.m} completes almost immediately.

\section{Verification of condition \texorpdfstring{$(\ast)$}{(*)} for the chosen parameters}
\label{sec:app_ast}

\begin{lemma}
    The condition $(\ast)$ holds for the triple $(p,q,\rho_x)=(11,8,\frac{11}{10})$.
\end{lemma}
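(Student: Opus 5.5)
We have to show that expression \eqref{eqn:condition_f} is positive for $p=11$, $q=8$, $\rho_x=11/10$, every $(m,n)\in\mathbb{N}^2$ with $\Delta>0$, and every $k\in\mathbb{N}$. Here $\Delta=\frac{23}{8}(2m+1)-2n$, $\nu_{k+1}-\nu_k=23$, and $S_k=\sum_{j=1}^{\nu_k}\rho_x^{-2j}$.

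\textbf{Reduction to a one-sided inequality.} Set $r=\rho_x^{-2}=100/121$ and $S_\infty=r/(1-r)=100/21$. Write $a=\Delta+4\nu_k$ and $b=\Delta+4\nu_{k+1}=a+92$. The expression then becomes
\[
23\Delta ab+(2\nu_k+1)S_k\, b(b+2)-(2\nu_{k+1}+1)S_{k+1}\, a(a+2).
\]
We use $S_k<S_{k+1}$ and $S_{k+1}-S_k=\sum_{j=\nu_k+1}^{\nu_k+23}r^j\le 23 r^{\nu_k+1}$. This splits the last two terms into
\[
S_k\bigl[(2\nu_k+1)b(b+2)-(2\nu_k+47)a(a+2)\bigr]-(2\nu_{k+1}+1)(S_{k+1}-S_k)\,a(a+2).
\]

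\textbf{Sign of the polynomial bracket.} Put $\nu=\nu_k$ and $b=a+92$, so the bracket is a polynomial in $(\nu,\Delta)$. Its leading behaviour in $\nu$ should be positive: it is of order $2\nu\cdot 2\cdot 92\,a-46a^2\sim 368\cdot 4\nu^2-46\cdot16\nu^2>0$. More precisely, with $a=4\nu+\Delta$ one gets
\[
(2\nu+1)\bigl(184a+92\cdot 94\bigr)-46a(a+2),
\]
which for $\Delta$ small relative to $\nu$ is positive. When $\Delta$ is large compared with $\nu$, the bracket may become negative. In that case I would use the first term $23\Delta ab$, which is cubic with coefficient $23\Delta$, to absorb the deficit, together with the bound $S_k\le S_\infty=100/21$. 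The combination
\[
23\Delta ab+S_\infty\cdot\min\{0,\text{bracket}\}
\]
is a polynomial in $(\nu,\Delta)$. I would check its positivity on $\nu\ge0$, $\Delta>0$ by grouping terms, for instance by showing that every coefficient of the expanded polynomial is positive after substituting $\Delta=\Delta'+\frac{1}{8}$. This substitution is available because $\Delta\in\frac18\mathbb{Z}$ and $\Delta>0$ force $\Delta\ge\frac18$.

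\textbf{Tail term.} The remaining negative piece is $(2\nu+47)\cdot 23 r^{\nu+1}a(a+2)$. It is exponentially small in $\nu$ but polynomial in $\Delta$. Against it I would keep $23\Delta ab\ge 23\Delta a^2$. The resulting condition is roughly
\[
\Delta\, a^2 \gtrsim (2\nu+47)\, r^{\nu+1}\, a(a+2),
\]
that is, $\Delta a\gtrsim (2\nu+47)r^{\nu+1}(a+2)$. For $\nu\ge\nu_0$, where $\nu_0$ is modest since $(2\nu+47)r^{\nu}$ decays like $(0.826)^\nu$, this holds whenever $\Delta\ge\frac18$ and $a\ge 4\nu$. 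For the finitely many small $\nu<\nu_0$, the quantity $S_k$ is an explicit rational. For each such $\nu$ the expression is a cubic polynomial in $\Delta$ with rational coefficients, and its positivity on $\Delta\ge\frac18$ can be checked by exact arithmetic (Sturm sequences or coefficient positivity after shifting), just as in the rest of the verification.

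\textbf{Main obstacle.} The hardest step is balancing the $\Delta$-large regime against the $\nu$-small regime, where neither the polynomial bracket nor the exponential tail is obviously dominated. I would first try the cleanest split: handle $\nu\le\nu_0$ (with $\nu_0$ around $40$–$60$) by exact computation, treated as a polynomial inequality in $\Delta$, and handle $\nu>\nu_0$ by the crude estimates above.
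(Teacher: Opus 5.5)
\textbf{Gap at $\nu_k=0$.} Your decomposition is a legitimate alternative to the paper's, but as written one step fails: the exact check at $\nu_k=0$. There $S_k=0$, so the bracket term vanishes, $a=\Delta$ and $b=\Delta+92$. The expression becomes
\[
\Delta\Bigl(23\Delta^2+(2116-47S_{k+1})\Delta-94S_{k+1}\Bigr),\qquad S_{k+1}=\sum_{j=1}^{23}\rho_x^{-2j}\approx 4.70\,.
\]
The quadratic factor has a positive root near $0.233$, and at $\Delta=\frac18$ the expression is about $-25.6$. So ``positivity on $\Delta\ge\frac18$'' is false for $\nu=0$, and no exact computation will certify it. Knowing only $\Delta\in\frac18\mathbb{Z}$, $\Delta>0$ is not enough.

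The missing observation is that $\nu_k=n+23k=0$ forces $k=n=0$. Hence $\Delta=\frac{23}{8}(2m+1)\ge\frac{23}{8}$, where the factor is clearly positive. This is exactly the special case the paper treats separately. (The paper's displayed cubic for it uses $S_1=\rho_x^{-2}$ instead of $\sum_{j=1}^{23}\rho_x^{-2j}$, but its conclusion survives for the same reason.)

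\textbf{Double use of $23\Delta ab$.} You spend this term twice: once against $S_\infty\min\{0,\cdot\}$, and again as $23\Delta a^2$ against the tail. Make the split $ab=a^2+92a$ explicit. The bracket equals $736\nu^2+17664\nu+8648+92\Delta-46\Delta^2$, and
\[
21\Bigl(2116\,\Delta a+\tfrac{100}{21}\,[\text{bracket}]\Bigr)=73600\nu^2+177744\,\nu\Delta+39836\Delta^2+1766400\nu+9200\Delta+864800>0 .
\]
So $23\cdot 92\,\Delta a$ alone absorbs the bracket, leaving $23\Delta a^2$ for the tail.

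\textbf{Comparison with the paper.} With both repairs your scheme closes. Taking $\nu_0=40$ comfortably suffices for your tail inequality, and for $1\le\nu<\nu_0$ the cubics are indeed positive for all $\Delta>0$. It still costs a few dozen exact polynomial checks.

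The paper's proof is uniform in $\nu$ and done by hand. It expands in $\Delta$ and shows $d_0,\dots,d_3>0$ whenever $\nu_k\ge1$. Writing $S_{k+1}=S_k+s_k$ and bounding $s_k\le 23\rho_x^{-2(\nu_k+1)}$ discards the key fact that the $\Delta$-free term is
\[
d_0=8(2\nu_k+1)(2\nu_{k+1}+1)\,\nu_k\nu_{k+1}\Bigl(\frac{S_k}{\nu_k}-\frac{S_{k+1}}{\nu_{k+1}}\Bigr)>0 .
\]
This is positive because $S_k/\nu_k$ is the mean of a decreasing sequence. The other coefficients then need only crude bounds such as $s_k<5<23$. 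Your ``main obstacle'' (small $\nu$ versus the exponential tail) is an artifact of throwing away this cancellation.
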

\begin{proof}
    Let us write expression \eqref{eqn:condition_f} as a polynomial in $\Delta$.
    \begin{align}
        \Theta(\Delta):=& d_3\Delta^3+d_2 \Delta^2+d_1 \Delta+d_0\,,
    \end{align}
    where
    \begin{align*}
        d_3&=(2p+1)\,,\\
        d_2&= 4(2p+1)(\nu_{k}+\nu_{k+1})+(2\nu_k+1)S_k- (2\nu_{k+1}+1)S_{k+1}\,,\\
        d_1&= 16(2p+1)\nu_k \nu_{k+1}+2(2\nu_k+1)(4\nu_{k+1}+1)S_{k}- 2(2\nu_{k+1}+1)(4\nu_k+1)S_{k+1}\,,\\
        d_0&= 8(2\nu_k+1)(2\nu_{k+1}+1)(\nu_{k+1}S_k-\nu_k S_{k+1})\,.
    \end{align*}
    
    First, let us assume that either $k$ or $n$ is nonzero, so $\nu_k\geq 1$. Then we can rewrite $d_0$ as
    \begin{align*}
        d_0 = 8(2\nu_k+1)(2\nu_{k+1}+1)\nu_{k+1}\nu_{k}\left(\frac{S_k}{\nu_k}- \frac{S_{k+1}}{\nu_{k+1}}\right)\,.
    \end{align*}
    Since $S_k/\nu_k$ is a mean of decreasing elements, it decreases with $k$ and $d_0>0$. For $d_2$ we have
    \begin{align*}
        d_2 &= 4(2p+1)(\nu_k+\nu_{k+1})-(2\nu_k+1)\sum_{j=\nu_k+1}^{\nu_k+2p+1}\rho_x^{-2j}-2(2p+1) \sum_{j=1}^{\nu_{k+1}}\rho_x^{-2j}\\
        &\geq4(2p+1)(\nu_k+\nu_{k+1})-(2p+1)(2\nu_k+1)-2(2p+1)\nu_{k+1}
        \\
        &=(2p+1)(2\nu_k+2\nu_{k+1}-1)>0\,.
    \end{align*}
    Now, we introduce $s_k=S_{k+1}-S_k$. Since $\rho_x=\frac{11}{10}$, $s_k<5$ for every $k\in\mathbb{N}$. Then for $p=11$ we have
    \begin{align*}
        d_1&=16\nu_k^2(2p+1-s_k)+\left[16(2p+1)(2p+1-s_k)-12s_k\right]\nu_k+4(2p+1) S_k-2(4p+3)s_k\\
        &\geq 288 \nu_k^2+6564\nu_k-470\,.
    \end{align*}
    As $\nu_k\geq 1$, $d_1>0$ and all of the coefficients of $\Theta(\Delta)$ are positive. It leads to expression \eqref{eqn:condition_f} being positive for $\Delta>0$.
    
    Now, let us assume that $k=n=0$, so we have (for $p=11$, $\rho_x=\frac{11}{10}$)
    \begin{align}\label{eq:Delta_kn0}
        \Theta(\Delta)=\left(23\Delta^2+\frac{251\,336}{121}\Delta-\frac{9400}{121}\right)\Delta\,.
    \end{align}
    Since then $\Delta=\frac{23}{8}(2m+1)\geq\frac{23}{8}$, polynomial \eqref{eq:Delta_kn0} is positive and so is expression \eqref{eqn:condition_f}.
\end{proof}

\medskip

\section*{Statements}
\noindent\textbf{Data availability.} The data supporting the findings of this study are provided in the paper and in the supplementary files available from the accompanying GitHub repository \url{https://github.com/mmal/\gitRepName}.

\medskip

\noindent\textbf{Conflict of interest.} The authors declare no conflicts of interest.

\medskip

\noindent\textbf{Use of AI.} The authors declare that the use of generative AI was limited to improving the grammar and formatting of the manuscript.

\medskip

\printbibliography

@unpublished{Maliborski.2016,
  author       = {Maliborski, Maciej},
  title = {{A toy model of geons–-cubic wave equation on AdS with axial symmetry}},
  year  = {2016},
  note   = {Unpublished manuscript},
}

@misc{Bizon.Personal2018,
  author       = {Bizoń, Piotr},
  howpublished = {Personal communication},
  year         = {2018},
}

@article{Rostworowski.2017,
  author    = {Andrzej Rostworowski},
  title     = {{Higher order perturbations of anti--de Sitter space and time-periodic solutions of vacuum Einstein equations}},
  journal   = {Physical Review D},
  volume    = {95},
  pages     = {124043},
  year      = {2017},
  doi       = {10.1103/PhysRevD.95.124043},
  eprint    = {1701.07804},
  archivePrefix = {arXiv},
  primaryClass  = {gr-qc},
  publisher = {American Physical Society}
}

@unpublished{Ficek.Future,
  author = {Ficek, Filip and Maliborski, Maciej},
  title  = {{New time-periodic solutions to the cubic conformal wave equation on the Einstein cylinder: II Axial symmetry}},
  note   = {Manuscript in preparation},
  year   = {2027}
}

@article{Ficek.2025a,
  title = {Complex Structure of Time-Periodic Solutions Decoded in {{Poincaré-Lindstedt}} Series: The Cubic Conformal Wave Equation on $\mathbb{S}^3$},
  shorttitle = {Complex Structure of Time-Periodic Solutions Decoded in {{Poincaré-Lindstedt}} Series},
  author = {Ficek, Filip and Maliborski, Maciej},
  year = {2025},
  journaltitle = {Physica D: Nonlinear Phenomena},
  shortjournal = {Physica D: Nonlinear Phenomena},
  volume = {481},
  eprint = {2504.10513},
  eprinttype = {arXiv},
  eprintclass = {math.AP},
  pages = {134864},
  issn = {01672789},
  doi = {10.1016/j.physd.2025.134864}
}

@article{Evnin.2020fys, 
year = {2020}, 
title = {{Breathing Modes, Quartic Nonlinearities and Effective Resonant Systems}}, 
author = {Evnin, Oleg}, 
journal = {Symmetry, Integrability and Geometry: Methods and Applications}, 
doi = {10.3842/sigma.2020.034}, 
eprint = {1912.07952}, 
  archivePrefix = {arXiv},
primaryClass = {math-ph}
}

@article{Bizoń.2014, 
year = {2014}, 
title = {{Is AdS stable?}}, 
author = {Bizoń, Piotr}, 
journal = {General Relativity and Gravitation}, 
issn = {0001-7701}, 
doi = {10.1007/s10714-014-1724-0}, 
eprint = {1312.5544},
  archivePrefix = {arXiv},
  primaryClass  = {gr-qc}, 
pages = {1724}, 
number = {5}, 
volume = {46}
}

@article{Bizoń.2017ndf, 
year = {2017}, 
title = {{Conformal Flow on $S^{3}$ and Weak Field Integrability in AdS${}_{4}$}}, 
author = {Bizoń, Piotr and Craps, Ben and Evnin, Oleg and Hunik, Dominika and Luyten, Vincent and Maliborski, Maciej}, 
journal = {Communications in Mathematical Physics}, 
issn = {0010-3616}, 
doi = {10.1007/s00220-017-2896-8}, 
eprint = {1608.07227}, 
archivePrefix = {arXiv},
primaryClass = {math.AP}, 
pages = {1179--1199}, 
number = {3}, 
volume = {353}
}

@article{Chatzikaleas.2020oh, 
year = {2020}, 
title = {{On time periodic solutions to the conformal cubic wave equation on the Einstein cylinder}}, 
author = {Chatzikaleas, Athanasios}, 
journal = {Journal of Mathematical Physics}, 
issn = {0022-2488}, 
doi = {10.1063/5.0026015}, 
eprint = {2004.11064}, 
  archivePrefix = {arXiv},
  primaryClass  = {math.AP}, 
pages = {111505}, 
number = {11}, 
volume = {61}
}

@article{Chatzikaleas.2024, 
year = {2024}, 
title = {{Nonlinear periodic waves on the Einstein cylinder}}, 
author = {Chatzikaleas, Athanasios and Smulevici, Jacques}, 
journal = {Analysis \& PDE}, 
issn = {2157-5045}, 
doi = {10.2140/apde.2024.17.2311}, 
eprint = {2201.05447}, 
  archivePrefix = {arXiv},
  primaryClass  = {math.AP}, 
pages = {2311--2378}, 
number = {7}, 
volume = {17}
}

@article{Silimbani.2024, 
year = {2024}, 
title = {{Massive Cantor families of periodic solutions of resonant Klein-Gordon equation on $\mathbb{S}^{3}$}}, 
author = {Silimbani, Diego}, 
journal = {arXiv}, 
doi = {10.48550/arxiv.2409.14845}, 
eprint = {2409.14845}, 
  archivePrefix = {arXiv},
  primaryClass  = {math.AP}
}

@article{Berti.2024, 
year = {2024}, 
title = {{Time-periodic solutions of completely resonant Klein–Gordon equations on $\mathbb{S}^{3}$}}, 
author = {Berti, Massimiliano and Langella, Beatrice and Silimbani, Diego}, 
journal = {Annales de l'Institut Henri Poincaré C, Analyse non linéaire}, 
issn = {0294-1449}, 
doi = {10.4171/aihpc/125}, 
eprint = {2308.05678}, 
  archivePrefix = {arXiv},
  primaryClass  = {math.AP}
}

@software{Mathematica.V15,
  author       = {{Wolfram Research, Inc.}},
  title        = {Mathematica},
  year         = {2026},
  version      = {15.0},
  organization = {Wolfram Research, Inc.},
  address      = {Champaign, Illinois},
  url          = {https://www.wolfram.com/mathematica/},
  shorthand    = {MMA},
}

@article{Ficek.2024A,
  title={{Periodic solutions for the 1D cubic wave equation with Dirichlet boundary conditions}},
  author={Ficek, Filip and Maliborski, Maciej},
  journal={Nonlinearity},
  volume={38},
  number={6},
  pages={065016},
  year={2025},
  publisher={IOP Publishing},
  doi={10.1088/1361-6544/add831},
  eprint = {2407.16507},
  archivePrefix = {arXiv},
  primaryClass  = {math.AP},
}

@misc{Ficek.2024B,
	author={Ficek, F. and Maliborski, M.},
	title={{Trees, trunks, and branches - bifurcation structure of time-periodic solutions to $u_{tt}-u_{xx}\pm u^3=0$}},
	year={2024},
    journal = {arXiv},
    doi = {10.48550/arxiv.2408.05158},
	eprint = {2408.05158},
      archivePrefix = {arXiv},
  primaryClass  = {math.AP},
}

@article{Arioli.2017, 
year = {2017}, 
title = {{Families of Periodic Solutions for Some Hamiltonian PDEs}}, 
author = {Arioli, Gianni and Koch, Hans}, 
journal = {SIAM Journal on Applied Dynamical Systems}, 
doi = {10.1137/16m1070177}, 
pages = {1--15}, 
number = {1}, 
volume = {16}
}

@book{berti2007nonlinear,
  title={{Nonlinear oscillations of Hamiltonian PDEs}},
  author={Berti, Massimiliano},
  volume={74},
  year={2007},
  publisher={Birkhäuser},
  address = {Boston},
  doi = {10.1007/978-0-8176-4681-3},
  issn = {1421-1750},
  series = {Progress in Nonlinear Differential Equations and Their Applications}
}

@book{arnold1988dynamical,
    title={{Dynamical systems III}},
    editor={Vladimir I. Arnold},
    author={Arnold, VI and Kozlov, VV and Neishtadt, AI},
    year={1988},
    series={{Encyclopaedia of Mathematical Sciences}},
    publisher={Springer Berlin, Heidelberg},
    isbn={978-3-662-02535-2},
    issn={0938-0396},
    doi={10.1007/978-3-662-02535-2}
}

@misc{ficek2025new,
  author        = {Filip Ficek and Maciej Maliborski},
  title         = {New class of time-periodic solutions to the 1D cubic wave equation},
  year          = {2025},
  eprint        = {2506.10839},
  archivePrefix = {arXiv},
  primaryClass  = {math.AP},
  doi           = {10.48550/arXiv.2506.10839},
}

@misc{Ficek2025timeperiodic,
  author        = {Filip Ficek},
  title         = {{Time-periodic solutions to the cubic wave equation: an elementary constructive approach}},
  year          = {2025},
  eprint        = {2510.20054},
  archivePrefix = {arXiv},
  primaryClass  = {math.AP},
  doi           = {10.48550/arXiv.2510.20054},
}

@article{Ficek.2026proc,
  author    = {Filip Ficek and Maciej Maliborski},
  title     = {{Time-periodic solutions of the conformally invariant wave equation on the Einstein cylinder}},
  journal   = {Journal of Physics: Conference Series},
  volume    = {3177},
  number    = {1},
  pages     = {012019},
  year      = {2026},
  doi       = {10.1088/1742-6596/3177/1/012019},
  publisher = {IOP Publishing},
  eprint = {2508.19717},
  archivePrefix = {arXiv},
  primaryClass  = {gr-qc},
}

@article{ficek2026nonlinear,
  author    = {Filip Ficek and Maciej Maliborski},
  title     = {{Nonlinear Oscillations of Strings and Beams}},
  journal   = {Progress of Theoretical and Experimental Physics},
  volume    = {2026},
  number    = {2},
  pages     = {021A01},
  year      = {2026},
  doi       = {10.1093/ptep/ptag002},
  eprint    = {2508.19091},
  archivePrefix = {arXiv},
  primaryClass  = {math-ph},
  publisher = {Oxford University Press}
}

\end{document}